\documentclass[11pt]{article}

\setcounter{tocdepth}{2}
\usepackage{framed}

\usepackage{geometry}           
\geometry{total = {6.0in,8.5in}, 
          left       = 1.4in,
          top        = 1.00in,
          bottom     = 1.25in}

\usepackage{amsthm}             
\usepackage{graphicx}           
\usepackage{amsmath}            
\usepackage{amssymb}            
\usepackage{float}              
\usepackage{verbatim}           
\usepackage{url}           	
\usepackage{xspace}		
\usepackage{pdfpages}           
\usepackage[sort,numbers]{natbib} 
\usepackage[breaklinks=true, colorlinks=true,
linkcolor=blue, citecolor=blue, urlcolor=blue]{hyperref} 
\usepackage{doi}
\usepackage{enumitem} 
\usepackage{subfigure}
\usepackage{epstopdf}
\usepackage{hyperref}

\usepackage{algorithm,algorithmic}
\newcommand{\balgorithm}  {\begin{algorithm}}
\newcommand{\ealgorithm}  {\end{algorithm}}
\newcommand{\balgorithmic}{\begin{algorithmic}}
\newcommand{\ealgorithmic}{\end{algorithmic}}

\usepackage{pst-tree}

\graphicspath{{images/}}

\newcommand{\R}{\mbox{${\mathbb R}$}}           
\newcommand{\E}{\mathbb E}           


\newtheorem{theorem}{Theorem}[section]

\newtheorem{assumption}[theorem]{Assumption}

\newcommand{\RCNote}[1]           
{\textcolor{red}{#1}\marginpar[\textbf{$\Longrightarrow$}]
{\textbf{$\Longleftarrow$}}}

\newcommand{\SWNote}[1]           
{\textcolor{blue}{#1}\marginpar[\textbf{$\Longrightarrow$}]
{\textbf{$\Longleftarrow$}}}

\makeatletter
\def\namedlabel#1#2{\begingroup
    #2%
    \def\@currentlabel{#2}%
    \phantomsection\label{#1}\endgroup
}
\makeatother


\newtheoremstyle{mytheoremstyle} 
    {\topsep}                    
    {\topsep}                    
    {\normalfont}                   
    {}                           
    {\scshape}                   
    {.}                          
    {.5em}                       
    {}  

\theoremstyle{mytheoremstyle}
\newtheorem{lemma}[theorem]{Lemma}

\numberwithin{equation}{section}
\numberwithin{figure}{section}
\numberwithin{table}{section}

\begin{document}


\begin{center}
{\Large\textbf{Randomized Derivative-Free Optimization of Noisy
Convex Functions}\footnotemark [1]}
%

\vspace{0.15in}

\textbf{Ruobing Chen\footnotemark [2] 
\quad
Stefan M. Wild\footnotemark [3]}
\vspace{0.15in}

\today
\end{center}

\footnotetext [1] {This material was based upon work supported by the  U.S.
Department of Energy, Office of Science, Office of Advanced Scientific Computing
Research, under Contract DE-AC02-06CH11357. }

\footnotetext [2] {Data Mining Services and Solutions, Bosch Research and Technology Center, Palo Alto, CA 94304.}

\footnotetext [3] {Mathematics and Computer Science Division, Argonne
  National Laboratory, Argonne, IL 60439.  }

\bigskip


\begin{abstract}
We propose {\sf STARS}, a randomized derivative-free algorithm for unconstrained 
optimization when the function evaluations are contaminated with random noise. 
{\sf STARS} takes dynamic, noise-adjusted smoothing stepsizes that minimize
the 
least-squares error between the true directional derivative of a noisy function 
and its finite difference approximation. We provide a convergence rate analysis
of 
{\sf STARS} for solving convex problems with additive or multiplicative noise. 
Experimental results show that (1) {\sf 
STARS} exhibits noise-invariant behavior with respect to different levels of 
stochastic noise; (2) the practical performance of {\sf STARS} in terms of
solution 
accuracy and convergence rate is significantly better than that indicated by the 
theoretical result; and (3) {\sf STARS} outperforms a selection of randomized 
zero-order methods on both additive- and multiplicative-noisy functions.
\end{abstract}


\section{Introduction}
\label{sec:intro}
We propose {\sf STARS}, a randomized derivative-free algorithm for unconstrained 
optimization when the function evaluations are contaminated with random noise. 
Formally, we address the stochastic optimization problem 
\begin{equation}
\underset{x\in \R^n}{\min}   f(x)=\E_\xi \left[
\tilde{f}(x;\xi)   \right], 
 \label{eqn.prob.chp3}
\end{equation}
where the objective $f(x)$ is assumed to be differentiable but is available
only through noisy realizations $\tilde{f}(x;\xi)$. In particular, 
although our analysis will at times assume that the 
gradient of the objective function $f(x)$ exist and be Lipschitz
continuous, 
we assume that direct evaluation of these derivatives is impossible. Of special 
interest to this work are situations when derivatives are unavailable or
unreliable because of  
stochastic noise in the objective function evaluations. This type of noise
introduces the dependence on the random variable $\xi$ in 
(\ref{eqn.prob.chp3}) and may 
arise if random fluctuations or measurement errors occur in a 
simulation producing the objective $f$. In addition to stochastic and 
Monte Carlo simulations, this stochastic noise can also be used to model the
variations in 
iterative or adaptive simulations resulting from finite-precision 
calculations and specification of internal tolerances \cite{more2014nd}.

Various methods have been designed for optimizing problems with 
noisy function evaluations. One such class of methods, dating back half a 
century, are \textit{randomized search methods} \cite{matyas1965}. Unlike 
classical, deterministic direct search methods \cite{VTorczon_1991, 
VTorczon_1997, RMLewis_VTorczon_MTrosset_2000, MAAbramson_CAudet_2006, 
CAudet_JEDennis_2006, MAAbramson_etal_2008}, randomized search methods 
attempt to accelerate the optimization by using random vectors as search 
directions. These randomized schemes share a simple basic framework, allow 
fast initialization, and have shown promise for solving large-scale
derivative-free problems \cite{Stich2011,Lan2012}.
Furthermore, optimization folklore and intuition suggest that these randomized steps should 
make the methods less sensitive to modeling errors and ``noise'' in the general 
sense; we will systematically revisit such intuition in our computational 
experiments. 

Recent works have addressed the special cases of zero-order minimization of
convex functions with additive noise. For instance, Agarwahl et
al.~\cite{NIPS2011_4475}
utilize a bandit feedback model, but the regret bound depends on a term of
order $n^{16}$.
Recht et al.~\cite{NIPS2012_4509} consider a coordinate descent approach
combined with
an approximate line search that is robust to noise, but only theoretical bounds
are provided. Moreover, the situation
where the noise is nonstationary (for example, varying relative to the
objective function) remains largely unstudied.

Our approach is inspired by the recent work of Nesterov \cite{Nest2011}, which 
established complexity bounds for convergence of random derivative-free methods 
for convex and nonconvex functions. Such methods work by iteratively moving 
along directions sampled from a normal distribution surrounding the current 
position. The conclusions are true for both the smooth and nonsmooth 
Lipschitz-continuous cases. Different improvements of these random search ideas
appear in 
the latest literature.  For instance, Stich et al.~\cite{Stich2011} give 
convergence rates for an algorithm where the search directions are 
uniformly distributed random vectors in a hypersphere and the stepsizes are 
determined by a line-search procedure. Incorporating the Gaussian smoothing 
technique of Nesterov \cite{Nest2011}, Ghadimi and Lan \cite{Lan2012} present
a randomized 
derivative-free  method for stochastic optimization and show that the
iteration complexity of their algorithm 
improves Nesterov's result by a factor of order $n$ in the smooth, convex 
case. 
Although complexity bounds are readily available for these randomized
algorithms, the practical 
usefulness of these algorithms and their potential for dealing with noisy 
functions have been relatively unexplored.

In this paper, we address ways in which a randomized method can benefit 
from careful choices of noise-adjusted smoothing stepsizes. We propose a new 
algorithm, {\sf STARS}, short for STepsize Approximation in Random Search. The
choice of stepsize 
work is greatly motivated by Mor{\'e} and Wild's recent work on estimating 
computational noise \cite{JMSMW11} and derivatives of noisy 
simulations \cite{More2012}. {\sf STARS} takes dynamically changing smoothing 
stepsizes that minimize the least-squares error between the true directional 
derivative of a noisy function and its finite-difference approximation. We 
provide a convergence rate analysis of {\sf STARS} for solving convex problems 
with both additive and multiplicative stochastic noise. With nonrestrictive 
assumptions 
about the noise, {\sf STARS} enjoys a convergence rate for noisy convex
functions 
identical to that of Nesterov's random search method for smooth convex 
functions. 

The second contribution of our work is a numerical study of {\sf 
STARS}. Our experimental results illustrate that (1) the performance of {\sf
STARS} exhibits little variability with respect to different levels of
stochastic noise; 
(2) the practical performance of {\sf STARS} in terms of solution accuracy and 
convergence rate is often significantly better than that indicated by the
worst-case, theoretical bounds; and (3) {\sf STARS} outperforms a selection of
randomized zero-order methods 
on both additive- and multiplicative-noise problems. 

The remainder of this paper is organized as follows. In Section~\ref{sec:ro} 
we review basic assumptions about the noisy function setting and 
results on Gaussian smoothing. Section~\ref{sec:STARS} presents the new 
{\sf STARS} algorithm. In
Sections~\ref{sec:additive.noise}~and~\ref{sec:mul.noise}, a convergence rate
analysis is provided for solving convex 
problems with additive noise and multiplicative noise, respectively.  
Section~\ref{sec:experiments} presents an empirical study of {\sf STARS} on
popular test problems by examining the performance relative to both the
theoretical bounds and other randomized derivative-free solvers. 

\section{Randomized Optimization Method Preliminaries}
\label{sec:ro}

One of the earliest randomized algorithms for the nonlinear, deterministic 
optimization problem 
\begin{equation}
 \min_{x \in \R^n} f(x),
 \label{eq:detprob}
\end{equation}
where the objective function $f$ is assumed to be differentiable but 
evaluations of the gradient $\nabla f$ are not employed by the algorithm, is
attributed to Matyas \cite{matyas1965}.
Matyas introduced a \textit{random optimization approach} that,
at every iteration $k$, randomly samples a point $x_{+}$ from a Gaussian
distribution centered on the current point $x_k$. The function is evaluated
at $x_+=x_k+u_k$, and the iterate is updated depending on whether decrease has
been seen:
\begin{equation*}
 x_{k+1} = \begin{cases}
            x_+ & \mbox{if } f(x_{+})<f(x_k) \\
            x_k & \mbox{otherwise.}
           \end{cases}
\end{equation*}

Polyak \cite{polyak1987} improved
this scheme by describing stepsize rules for iterates of the form 
\begin{equation}\label{eq:poljak}
x_{k+1} = x_k - h_k \dfrac{ f(x_k+\mu_k u_k)-f(x_k) }{\mu_k}u_k,
\end{equation}
where $h_k>0$ is the stepsize, $\mu_k>0$ is called the smoothing stepsize,
and $u_k\in \R^n$ is a random direction.

Recently, Nesterov \cite{Nest2011} has revived interest in Poljak-like schemes
by showing that {Gaussian} directions $u\in \R^n$ 
allow one to benefit from properties of a Gaussian-smoothed version of the
 function $f$, 
 \begin{equation}
  f_\mu(x)=\E_u[f(x+\mu u)],
  \label{eq:smoothedf}
 \end{equation}
where $\mu >0$ is again the smoothing stepsize and where
we
have made explicit that the expectation is being taken with respect to the
random vector $u$. 

Before proceeding, we review additional notation and results concerning Gaussian
smoothing.

\subsection{Notation}

We say that a function $f\in  \mathcal{C}^{0,0}(\mathbb{R}^n)$ if
$f:\mathbb{R}^n \mapsto \mathbb{R}$ is continuous and there exists a
constant $L_0$ such
that
$$ | f(x)- f(y) |\le L_0 \|x-y\|,\quad \forall x,y\in \R^n,$$
where $\|\cdot\|$ denotes the Euclidean norm. 
We say that $f\in \mathcal{C}^{1,1}(\mathbb{R}^n)$ if $f:\mathbb{R}^n
\mapsto \mathbb{R}$ is continuously differentiable and there exists a constant
$L_1$ such
that
\begin{equation} 
 \| \nabla f(x)-\nabla f(y) \|\le L_1 \|x-y\| \quad \forall x,y\in \R^n.
\label{eq:L1first}
\end{equation}
Equation~(\ref{eq:L1first}) is equivalent to
\begin{equation}
| f(y)-f(x) -\langle \nabla f(x), y-x \rangle  |   \le \dfrac{L_1}{2} \| x-y 
\|^2 \quad \forall x,y\in \R^n,  
\label{eq:L1second}
\end{equation}
where $\langle\cdot,\cdot\rangle$ denotes the Euclidean inner product.

Similarly, if $x^*$ is a global minimizer of $f\in
\mathcal{C}^{1,1}(\mathbb{R}^n)$, then (\ref{eq:L1second}) implies that
\begin{equation} 
\| \nabla f(x) \|^2 \le 2L_1(f(x)-f(x^*)) \quad \forall x\in \R^n.  
\label{eq:L1third}
\end{equation}
We recall that a differentiable function $f$ is convex if 
\begin{equation} 
f(y)\ge f(x)+ \langle  \nabla f(x),y-x \rangle  \quad \forall x,y\in \R^n. 
\label{eq:convexity}
\end{equation}

\subsection{Gaussian Smoothing}

We now examine properties of the Gaussian approximation of $f$ in
(\ref{eq:smoothedf}). For $\mu\neq 0$, we let $g_\mu(x)$ be the
first-order-difference approximation of the derivative of $f(x)$ in the
direction $u\in \R^n$,
\[g_\mu(x) = \frac{f(x+\mu u)-f(x)}{\mu}u,\]
where the nontrivial direction $u$ is implicitly assumed.
By $\nabla f_\mu(x)$ we denote the gradient (with respect to $x$) of the
Gaussian approximation in (\ref{eq:smoothedf}). 
For standard (mean zero, covariance $I_n$) Gaussian random vectors $u$ and a
scalar $p\geq 0$, we define
\begin{equation}
 M_p\equiv\E_{u}[\| u \|^p ] = 
\frac{1}{(2\pi)^{\frac{n}{2}}}
\int_{\mathbb{R}^n}  \| u \|^p  e^{-\frac{1}{2}\| u \|^2}du.
\label{eq:Mp}
\end{equation}

We summarize the relationships for Gaussian smoothing from 
\cite{Nest2011} upon which we will rely in the following lemma.

\begin{lemma}\label{lemma.nesterov.results}
Let $u\in \R^n$ be a normally distributed Gaussian vector. Then, the following
are true.
\begin{itemize}
\item[(a)] For $M_p$ defined in (\ref{eq:Mp}), we have 
\begin{eqnarray}
M_p&\le &n^{p/2}, \quad \mbox{ for } p\in [0,2], \quad \mbox{and}\label{eq:M_p.le2} \\
M_p &\le &(n+p)^{p/2}, \quad \mbox{ for } p> 2.\label{eq:M_p.ge2}
\end{eqnarray}

\item[(b)] If $f$ is convex, then
\begin{eqnarray}
f_\mu(x)&\ge& f(x) \quad \forall x\in \R^n. \label{eq.f_mu.1}
\end{eqnarray}
\item[(c)] 
If $f$ is convex and $f\in\mathcal{C}^{1,1}(\R^n)$, then
\begin{eqnarray}
|f_\mu(x)-f(x)|&\le& \frac{\mu^2}{2}L_1n \quad \forall x\in 
\R^n.\label{eq.f_mu.2}
\end{eqnarray}

\item[(d)] If $f$ is differentiable at $x$, then 
\begin{eqnarray}
\E_{u}[g_\mu(x)]&=&\nabla f_\mu(x) \quad \forall x\in \R^n.
\label{eqn.exp.of.g_mu}
\end{eqnarray}
\item[(e)] 
If $f$ is differentiable at $x$ and $f\in\mathcal{C}^{1,1}(\R^n)$, then
\begin{eqnarray}
\E_{u}[\|g_\mu(x)\|^2]&\le& 2(n+4)\|\nabla
f(x)\|^2+\frac{\mu^2}{2}L_1^2(n+6)^3\quad \forall x\in \R^n.
\label{eqn.exp.of.g_mu.square}
\end{eqnarray}
\end{itemize}
\end{lemma}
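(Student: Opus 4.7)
The plan is to prove the five parts in turn, each combining an elementary pointwise inequality with a Gaussian-moment computation. For part (a), in the range $p\in[0,2]$ I would apply Jensen's inequality to the concave map $t\mapsto t^{p/2}$ on $[0,\infty)$, yielding $M_p = \E[(\|u\|^2)^{p/2}] \le (\E\|u\|^2)^{p/2} = n^{p/2}$. For $p>2$ this function is convex and Jensen goes the wrong way; instead I would use $\|u\|^2 \sim \chi^2_n$ to express $M_p = 2^{p/2}\,\Gamma((n+p)/2)/\Gamma(n/2)$ and apply a standard Gamma-ratio inequality to conclude $M_p\le(n+p)^{p/2}$. Part (b) is immediate from Jensen applied to the convex $f$: $f_\mu(x)=\E_u[f(x+\mu u)] \ge f(x+\mu\E[u])=f(x)$, since $\E[u]=0$.

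For part (c), I would apply the $L_1$-smoothness inequality (\ref{eq:L1second}) pointwise at $y=x+\mu u$, giving $|f(x+\mu u)-f(x)-\mu\langle \nabla f(x),u\rangle|\le \tfrac{L_1\mu^2}{2}\|u\|^2$; taking expectation over $u$, the linear term vanishes because $\E[u]=0$, and the right-hand side becomes $\tfrac{L_1\mu^2}{2}M_2\le\tfrac{L_1\mu^2}{2}n$ by (a). For part (d), I would write out the smoothing integral $f_\mu(x)=(2\pi)^{-n/2}\int f(x+\mu u)e^{-\|u\|^2/2}\,du$, change variables $v=x+\mu u$ so that the $x$-dependence moves into the Gaussian density, differentiate under the integral in $x$, and change variables back; the resulting expression equals $\E_u[f(x+\mu u)u/\mu]$, which coincides with $\E_u[g_\mu(x)]$ since the subtracted term $f(x)\E_u[u]/\mu$ is zero. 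Justifying the differentiation under the integral is routine given the Gaussian tail decay and the at-most-linear growth afforded by $f\in\mathcal{C}^{0,0}$ (or pointwise differentiability at $x$).

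For part (e), the plan is to again invoke (\ref{eq:L1second}) to write $f(x+\mu u)-f(x)=\mu\langle\nabla f(x),u\rangle+r(x,\mu u)$ with $|r(x,\mu u)|\le \tfrac{L_1\mu^2}{2}\|u\|^2$. Squaring, using $(a+b)^2\le 2a^2+2b^2$, and multiplying by $\|u\|^2/\mu^2$ gives the pointwise bound
\[
\|g_\mu(x)\|^2 \;\le\; 2\langle \nabla f(x),u\rangle^2 \|u\|^2 \;+\; \frac{L_1^2\mu^2}{2}\|u\|^6.
\]
Taking expectations, the second term is controlled by $\tfrac{L_1^2\mu^2}{2}M_6\le\tfrac{L_1^2\mu^2}{2}(n+6)^3$ via (a). I expect the only mildly delicate step to be the remaining fourth-moment computation: rotating coordinates so that $\nabla f(x)$ is aligned with $e_1$ reduces $\E_u[\langle\nabla f(x),u\rangle^2\|u\|^2]$ to $\|\nabla f(x)\|^2\bigl(\E[u_1^4]+\sum_{i\ne 1}\E[u_1^2u_i^2]\bigr)=(n+2)\|\nabla f(x)\|^2$, which is comfortably bounded by $(n+4)\|\nabla f(x)\|^2$, matching the stated constant and completing the claim.
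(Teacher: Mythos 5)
Your plan is correct in all five parts; note, though, that the paper itself gives no proof of this lemma --- it is quoted verbatim from Nesterov \cite{Nest2011} --- so the comparison is really with Nesterov's original arguments. Parts (b), (c), and (d) match those arguments exactly (Jensen for (b); the pointwise $\mathcal{C}^{1,1}$ bound plus $\E[u]=0$ and $M_2=n$ for (c); change of variables and differentiation under the integral for (d), where the Gaussian tail easily dominates the at-most-quadratic growth of a $\mathcal{C}^{1,1}$ function). You deviate in two places, both legitimately. In (a) for $p>2$, Nesterov avoids the Gamma function entirely by writing $\|u\|^pe^{-\|u\|^2/2}=\bigl(\|u\|^pe^{-\tau\|u\|^2/2}\bigr)e^{-(1-\tau)\|u\|^2/2}$, maximizing the first factor pointwise, and optimizing over $\tau=p/(n+p)$; your route via $\|u\|^2\sim\chi^2_n$ needs the inequality $2^{p/2}\Gamma((n+p)/2)/\Gamma(n/2)\le(n+p)^{p/2}$, which is true but not quite a one-liner --- you would want to spell it out, e.g.\ by bounding the integer moments of a $\mathrm{Gamma}(n/2,1)$ variable by $(n/2+m-1)^m$ and interpolating with Lyapunov's inequality for fractional $p/2$. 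In (e), your decomposition and the $(a+b)^2\le 2a^2+2b^2$ step reproduce Nesterov's proof, but you evaluate $\E\bigl[\langle g,u\rangle^2\|u\|^2\bigr]=(n+2)\|g\|^2$ exactly by rotational invariance rather than invoking his general $(n+4)$ moment bound; this is cleaner and still gives the stated constant since $n+2\le n+4$ and $M_6=n(n+2)(n+4)\le(n+6)^3$. No gaps.
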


\section{The {\sf STARS} Algorithm}\label{sec:STARS}

The {\sf STARS} 
algorithm for
solving (\ref{eqn.prob.chp3}) while having access to the objective $f$ only
through its noisy version $\tilde{f}$ is summarized in
Algorithm~\ref{alg:STARS}.

\balgorithm[t]
\caption{({\sf STARS}: STep-size Approximation in Randomized Search)}
\label{alg:STARS}
 \balgorithmic[1]
\STATE Choose initial point $x_1$, iteration limit $N$, stepsizes
$\{h_k\}_{k\ge1}$. Evaluate the function at the initial point to obtain 
$\tilde{f}(x_1;\xi_0)$. Set $k \gets 1$.
\STATE Generate a random Gaussian vector $u_k$, and compute the smoothing
parameter $\mu_k$. \\
\STATE Evaluate the function value $\tilde{f}(x_k + \mu_ku_k;\xi_k)$.\\
\STATE Call the stochastic gradient-free oracle 
\begin{equation}
 s_{\mu_k} (x_k;u_k,\xi_k,\xi_{k-1}) =
\frac{\tilde{f}(x_k+\mu_ku_k;\xi_k)-\tilde{f}(x_k;\xi_{k-1})}{\mu_k}u_k.
\label{eq:gfo}
\end{equation}
\STATE Set  $x_{k+1}=x_k-h_k s_{\mu_k} (x_k;u_k,\xi_k,\xi_{k-1}) $.\\
\STATE Evaluate $\tilde{f}(x_{k+1};\xi_k)$, update $k\gets k+1$, and return to
Step 2.
  \ealgorithmic
\ealgorithm

In general, the Gaussian directions used by Algorithm~\ref{alg:STARS} can come
from general Gaussian directions (e.g., with the covariance informed by 
knowledge about the scaling or curvature of $f$). For simplicity of
exposition, however, we focus on standard Gaussian directions as formalized in 
Assumption \ref{assumption_u}.
The general case can be recovered by a change of variables with an appropriate
scaling of the Lipschitz constant(s).

\begin{assumption}[Assumption about direction $u$]\label{assumption_u} 
In each iteration $k$ of Algorithm~\ref{alg:STARS}, $u_k$ is a vector drawn
from a multivariate normal distribution with mean $0$ and covariance matrix
$I_n$; equivalently, each element of $u$ is independently and identically
distributed (i.i.d.) from a standard normal distribution, $\mathcal{N}(0,1)$.
\end{assumption}

What remains to be specified is the smoothing stepsize $\mu_k$. It is computed
by incorporating the noise information so that the approximation of the
directional derivative has minimum error. We address two types of noise:
\textit{additive noise} (Section~\ref{sec:additive.noise}) and
\textit{multiplicative noise} (Section~\ref{sec:mul.noise}). 
These two forms of how $\tilde{f}$ depends on the random variable
$\xi$ correspond to two ways that noise often enters a system. The following
sections provide near-optimal expressions for $\mu_k$ and a convergence rate
analysis for both cases.

Importantly, we note Algorithm~\ref{alg:STARS} allows the random variables
$\xi_k$ and $\xi_{k-1}$ used in (\ref{eq:gfo}) to be different from one
another. This generalization is in contrast to the stochastic optimization
methods examined in \cite{Nest2011}, where it is assumed the same random
variables are used in the smoothing calculation. 
This generalization does not affect the additive noise case, but will
complicate the multiplicative noise case.

\section{Additive Noise}\label{sec:additive.noise}

We first consider an \textit{additive noise} model for the stochastic 
objective function $\tilde{f}$:
\begin{equation}
\tilde{f}(x;\xi)=f(x)+\nu(x;\xi),
\label{eq:funcadd}
\end{equation}
where $f:\mathbb{R}^n \mapsto \mathbb{R}$ is a smooth, deterministic function,
$\xi \in \Xi$ is a random vector with probability distribution $P(\xi)$,
and $\nu(x;\xi)$ is the stochastic noise component. 

We make the following assumptions about $f$ and $\nu$.

\begin{assumption}[Assumption about $f$]\label{assumption_f_add} 
$f\in \mathcal{C}^{1,1}(\mathbb{R}^n)$ and $f$ is convex. 
\end{assumption}

\begin{assumption}[Assumption about additive $\nu$]\label{assumption_add}
\item[1.] For all $x\in \R^n$, $\nu$ is i.i.d. with bounded variance
$\sigma_a^2 = \mbox{Var}(\nu(x;\xi))>0$.

\item[2.]  For all $x\in \R^n$, the noise is unbiased; that is, $\E_\xi [\nu
(x;\xi)] =0$. 
\end{assumption}

We note that $\sigma_a^2$ is independent of $x$ since $\nu(x;\xi)$ is
identically distributed for all $x$. The second assumption is nonrestrictive,
since if $\E_\xi [\nu (x;\xi)] \neq 0$, we
could just redefine $f(x)$ to be $f(x)-\E_\xi [\nu (x;\xi)]$.

\subsection{Noise and Finite Differences}

Mor\'e and Wild \cite{More2012} introduce a way of computing the smoothing
stepsize $\mu$ that mitigates the effects of the noise in $\tilde{f}$ when
estimating a first-order directional directive. The method involves 
analyzing the expectation of the least-squared error between the
forward-difference approximation, $\frac{\tilde{f}(x+\mu u
;\xi_1)-\tilde{f}(x;\xi_2)}{\mu}$, and the directional derivative of the
smooth function, $\langle \nabla f(x),u\rangle $. The authors show that a
near-optimal $\mu$ can be computed in such a way that the expected error has the
tightest upper bound among all such values $\mu$. Inspired by their approach, we
consider the least-square error between $\frac{\tilde{f}(x+\mu
u;\xi_1)-\tilde{f}(x;\xi_{2})}{\mu}u$ and $\langle \nabla f(x),u\rangle u$. That
is, our goal is to find $\mu^*$ that minimizes an upper bound on $\E[
\mathcal{E}(\mu) ]$, where
\begin{equation*}
 \mathcal{E}(\mu)\equiv \mathcal{E}(\mu;x,u,\xi_1,\xi_2)=\left\|
\frac{\tilde{f}(x+\mu u;\xi_1)-\tilde{f}(x;\xi_{2})}{\mu}u-\langle \nabla
f(x),u\rangle u  \right\|^2.
\end{equation*}
We recall that $u$, $\xi_1$, and $\xi_2$ are independent random
variables. 


\begin{theorem}\label{thm.optstep.add}
Let Assumptions \ref{assumption_u}, \ref{assumption_f_add}, and
\ref{assumption_add} hold. 
If a smoothing stepsize is chosen as 
\begin{equation}\label{eqn.opt.mu}
\mu^* = \left[\frac{8\sigma_a^2n}{L_1^2(n+6)^3}\right]^{\frac{1}{4}}, 
\end{equation}
then for any $x\in \R^n$, we have 
\begin{eqnarray}\label{eqn.opt.error.bound.add}
\E_{u,\xi_1,\xi_2}[ \mathcal{E}(\mu^*) ] \le \sqrt{2}L_1\sigma_a \sqrt{n(n+6)^3}. 
\end{eqnarray}
\end{theorem}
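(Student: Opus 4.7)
The plan is to decompose the quantity inside the norm defining $\mathcal{E}(\mu)$ into a deterministic finite-difference error and a noise term, and bound the two separately after showing the cross-term vanishes. Substituting the additive model (\ref{eq:funcadd}), write
\[
\frac{\tilde{f}(x+\mu u;\xi_1)-\tilde{f}(x;\xi_2)}{\mu}-\langle \nabla f(x),u\rangle \;=\; a(\mu,u) + b(\mu,u,\xi_1,\xi_2),
\]
where $a(\mu,u) = \frac{f(x+\mu u)-f(x)}{\mu}-\langle\nabla f(x),u\rangle$ and $b(\mu,u,\xi_1,\xi_2) = \frac{\nu(x+\mu u;\xi_1)-\nu(x;\xi_2)}{\mu}$. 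Then $\mathcal{E}(\mu)=\|u\|^2(a+b)^2$.

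Next I would expand $(a+b)^2 = a^2 + 2ab + b^2$ and take expectations in the order $\xi_1,\xi_2$ first, then $u$. Since $a$ depends only on $u$, and by Assumption~\ref{assumption_add} the noises are zero-mean and independent of $u$, the cross-term satisfies $\E_{\xi_1,\xi_2}[\|u\|^2 \cdot 2ab\mid u]=0$. This is the key observation that keeps the constant tight (without it, one would resort to $(a+b)^2\le 2a^2+2b^2$ and pay a factor of two). For the $a^2$ term, (\ref{eq:L1second}) from Assumption~\ref{assumption_f_add} yields $|a|\le \tfrac{L_1\mu}{2}\|u\|^2$, and thus $\E_u[\|u\|^2 a^2] \le \tfrac{L_1^2\mu^2}{4}\E_u[\|u\|^6]=\tfrac{L_1^2\mu^2}{4}M_6$. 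For the $b^2$ term, using that $\nu(\cdot;\xi_1)$ and $\nu(\cdot;\xi_2)$ are i.i.d.\ with variance $\sigma_a^2$, we get $\E_{\xi_1,\xi_2}[b^2\mid u]=\tfrac{2\sigma_a^2}{\mu^2}$, hence $\E[\|u\|^2 b^2]=\tfrac{2\sigma_a^2}{\mu^2}M_2$. Applying the moment bounds $M_2\le n$ and $M_6\le(n+6)^3$ from Lemma~\ref{lemma.nesterov.results}(a) gives
\[
\E_{u,\xi_1,\xi_2}[\mathcal{E}(\mu)] \;\le\; \frac{L_1^2(n+6)^3}{4}\,\mu^2 + \frac{2\sigma_a^2 n}{\mu^2}.
\]

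Finally, I would minimize the right-hand side over $\mu>0$. Writing the bound as $A\mu^2+B/\mu^2$ with $A=L_1^2(n+6)^3/4$ and $B=2\sigma_a^2 n$, the first-order condition gives $(\mu^*)^4 = B/A = 8\sigma_a^2 n/[L_1^2(n+6)^3]$, matching (\ref{eqn.opt.mu}), and the minimum value is $2\sqrt{AB} = \sqrt{2}\,L_1\sigma_a\sqrt{n(n+6)^3}$, which is exactly (\ref{eqn.opt.error.bound.add}). The only subtle step is the vanishing cross-term argument, which requires being careful that the randomness in $u$ and in $\xi_1,\xi_2$ can be integrated in the stated order using Assumption~\ref{assumption_add}; the rest is routine application of the lemma and one-dimensional calculus.
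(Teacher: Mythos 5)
Your proof is correct and follows essentially the same route as the paper: the paper folds your vanishing cross-term observation into the identity $\E[X^2]=(\E[X])^2+\mathrm{Var}(X)$ for $X=\frac{\nu(x+\mu u;\xi_1)-\nu(x;\xi_2)}{\mu}+\frac{\mu L_1}{2}\|u\|^2$, and otherwise uses the same moment bounds $M_2\le n$, $M_6\le(n+6)^3$ and the same one-dimensional minimization of $A\mu^2+B/\mu^2$. If anything, your explicit handling of the cross-term is slightly cleaner, since the paper's opening pointwise inequality $\mathcal{E}(\mu)\le X^2\|u\|^2$ only becomes valid after taking the expectation over $\xi_1,\xi_2$.
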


\begin{proof}
Using (\ref{eq:funcadd}) and (\ref{eq:L1second}), we derive
\begin{eqnarray*}
\mathcal{E}(\mu)&\le& \left\|\frac{\nu(x+\mu u;\xi_1)-\nu(x;\xi_2)}{\mu} u +
\frac{\mu L_1}{2} \|u\|^2 u\right\|^2 \\
&\le& \left( \frac{\nu(x+\mu u;\xi_1)-\nu(x;\xi_2)}{\mu} + \frac{\mu L_1}{2}
\|u\|^2 \right)^2 \|u\|^2 . 
\end{eqnarray*}
Let $X =  \frac{\nu(x+\mu u;\xi_1)-\nu(x;\xi_2)}{\mu} + \frac{\mu L_1}{2}
\|u\|^2 $. 
By Assumption~\ref{assumption_add}, 
the expectation of $X$ with respect to $\xi_1$ and
$\xi_2$ is $\E_{\xi_1,\xi_2}[X]=\frac{\mu L_1}{2} \|u\|^2$, and the
corresponding variance is 
$\mbox{Var}(X)=\frac{2\sigma_a^2}{\mu^2}$. 
It then follows that
\begin{eqnarray*}
\E_{\xi_1,\xi_2}[X^2] =(\E_{\xi_1,\xi_2}[X])^2+ \mbox{Var}(X) = \frac{\mu^2 L_1^2}{4} \|u\|^4+\frac{2\sigma_a^2}{\mu^2}.
\end{eqnarray*}

Hence, taking the expectation of $\mathcal{E}(\mu)$ with respect to $u,\xi_1$,
and $\xi_2$ yields
\begin{eqnarray*}
\E_{u,\xi_1,\xi_2}[ \mathcal{E}(\mu) ]&\le&\E_u\left[  \E_{\xi_1,\xi_2}[X^2\|u\|^2]    \right]\\
&=&\E_u\left[   \frac{\mu^2 L_1^2}{4} \|u\|^6+\frac{2\sigma_a^2}{\mu^2}   
\|u\|^2   \right] .
\end{eqnarray*}
Using (\ref{eq:M_p.le2}) and (\ref{eq:M_p.ge2}), we can further derive
\begin{equation}
\E_{u,\xi_1,\xi_2}[ \mathcal{E}(\mu) ] \le  \frac{\mu^2
L_1^2}{4} (n+6)^3 + \frac{2\sigma_a^2}{\mu^2}n.
 \label{eq:absgenbound}
\end{equation}

The right-hand side of (\ref{eq:absgenbound}) is uniformly convex in $\mu$ and
has a global minimizer of 
\begin{equation*}
\mu^* = \left[\frac{8\sigma_a^2n}{L_1^2(n+6)^3}\right]^{\frac{1}{4}}, 
\end{equation*}
with the corresponding minimum value yielding (\ref{eqn.opt.error.bound.add}).
\end{proof}

\paragraph{Remarks:} 
\begin{itemize}
\item A key observation is that for a function $\tilde{f}(x;\xi)$ with additive
noise,
as long as the noise has a constant variance $\sigma_a>0$, the optimal choice of
the stepsize $\mu^*$ is independent of $x$.
\item Since the proof of Theorem \ref{thm.optstep.add} does not rely on the
convexity assumption about $f$, the error bound (\ref{eqn.opt.error.bound.add})
for
the finite-difference approximation also holds for the nonconvex case.
The convergence rate analysis for {\sf STARS} presented in the next section,
however, will assume convexity of $f$; the nonconvex case is out of the
scope of this paper but is of interest for future research.
\end{itemize}
\subsection{Convergence Rate Analysis}

We now examine the convergence rate of Algorithm~\ref{alg:STARS}
applied to the additive noise case of (\ref{eq:funcadd}) and
with $\mu_k=\mu^*$ for all $k$. One of the
main ideas behind this convergence proof relies on the fact that we can derive
the improvement in $f$ achieved by each step in terms of the change in
$x$. Since the distance between the starting point and the optimal solution,
denoted by $R=\| x_0-x^* \|$, is finite, one can derive an upper bound for the
``accumulative improvement in $f$,''
$\frac{1}{N+1}\sum_{k=0}^N(\mathbb{E}[f(x_k)]-f^*)$.
Hence, we can show that increasing the number of iterations, $N$,  of
Algorithm~\ref{alg:STARS} yields higher accuracy in the solution. 

For simplicity, we denote by $\E[\cdot]$ the expectation over all random
variables
(i.e., $ \E[\cdot] = \E_{u_k, \ldots, u_1,\xi_k, \ldots,
\xi_0}[\cdot]$), unless
otherwise specified. Similarly, we denote $s_{\mu_k} (x_k;u_k,\xi_k,\xi_{k-1}) $
in (\ref{eq:gfo}) by
$s_{\mu_k}$. The following lemma directly follows from
Theorem~\ref{thm.optstep.add}.

\begin{lemma}\label{lemma.s_mu.square}
Let Assumptions \ref{assumption_u}, \ref{assumption_f_add}, and
\ref{assumption_add} hold. 
If the smoothing stepsize $\mu_k$ is set to the constant $\mu^*$ from 
(\ref{eqn.opt.mu}), then Algorithm~\ref{alg:STARS} generates steps satisfying 
\begin{eqnarray*}
\E[\|s_{\mu_k}\|^2] \le 2(n+4)\| \nabla f(x_k) \|^2 +C_2,
\end{eqnarray*}
where $C_2 = 2\sqrt{2}L_1\sigma_a\sqrt{n(n+6)^3}$.
\end{lemma}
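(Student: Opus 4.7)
The plan is to apply the triangle-like inequality $\|a+b\|^2 \le 2\|a\|^2 + 2\|b\|^2$ to split $\|s_{\mu^*}\|^2$ into a piece bounded by Theorem~\ref{thm.optstep.add} and a standard Gaussian fourth-moment expression. Taking $a = s_{\mu^*} - \langle \nabla f(x_k), u_k\rangle u_k$ and $b = \langle \nabla f(x_k), u_k\rangle u_k$, this yields
\[
\|s_{\mu^*}\|^2 \;\le\; 2\, \mathcal{E}(\mu^*;x_k,u_k,\xi_k,\xi_{k-1}) \;+\; 2\,\langle \nabla f(x_k), u_k\rangle^2 \|u_k\|^2,
\]
where $\mathcal{E}$ is the squared error defined immediately before Theorem~\ref{thm.optstep.add}.

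Taking full expectation and applying the bound (\ref{eqn.opt.error.bound.add}) to the first term gives $2\,\E[\mathcal{E}(\mu^*)] \le 2\sqrt{2}\, L_1 \sigma_a \sqrt{n(n+6)^3} = C_2$. For the second term, I would compute the Gaussian fourth moment $\E_u[\langle \nabla f(x_k), u_k\rangle^2 \|u_k\|^2] = (n+2)\|\nabla f(x_k)\|^2$, either by rotational invariance (assume WLOG $\nabla f(x_k)$ aligns with $e_1$, then $\E[u_1^4] + (n-1)\E[u_1^2 u_2^2] = 3 + (n-1) = n+2$), or from the Gaussian identity $\E_u[u_k^\top A u_k \cdot u_k^\top B u_k] = \operatorname{tr}(A)\operatorname{tr}(B) + 2\operatorname{tr}(AB)$ applied with $A = \nabla f(x_k)\nabla f(x_k)^\top$ and $B = I_n$.

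Combining the two pieces and loosening $2(n+2) \le 2(n+4)$ produces the claimed bound. The only real work is the Gaussian fourth-moment computation, which is routine; everything else plugs directly into Theorem~\ref{thm.optstep.add}, which is the sense in which the lemma ``directly follows'' from that theorem. I note that a slightly tighter route would be to decompose $s_{\mu^*}$ into a noise-free finite-difference part plus an unbiased noise term and bound the first directly by (\ref{eqn.exp.of.g_mu.square}) of Lemma~\ref{lemma.nesterov.results}, but the cleaner ``direct'' approach above already delivers exactly the claimed form.
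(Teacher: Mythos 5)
Your proof is correct and arrives at exactly the claimed constant, but by a genuinely different route than the paper. The paper avoids the factor of $2$ from Young's inequality: it expands $\|s_{\mu_k}\|^2 = \|s_{\mu_k}-g_0\|^2 + 2\langle s_{\mu_k},g_0\rangle - \|g_0\|^2$, bounds the first term by $C_1=\sqrt{2}L_1\sigma_a\sqrt{n(n+6)^3}$ via Theorem~\ref{thm.optstep.add}, then uses the unbiasedness $\E_{\xi_k,\xi_{k-1}}[s_{\mu_k}]=g_{\mu}(x_k)$ to replace $s_{\mu_k}$ by $g_\mu(x_k)$ in the cross term, completes the square to get $\E_{u}[\|g_\mu(x_k)\|^2]$, and finally invokes \Ref{eqn.exp.of.g_mu.square}, whose additive term $\frac{(\mu^*)^2}{2}L_1^2(n+6)^3$ evaluates to exactly another $C_1$ at the optimal stepsize --- hence $C_2=2C_1$. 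Your argument instead pays the $2C_1$ up front by doubling the error term, and recovers the gradient coefficient from the elementary Gaussian fourth moment $\E_u[\langle \nabla f(x_k),u\rangle^2\|u\|^2]=(n+2)\|\nabla f(x_k)\|^2 \le (n+4)\|\nabla f(x_k)\|^2$ (your computation is right). What your approach buys is simplicity and self-containedness: you need neither the unbiasedness of the oracle with respect to $\xi_k,\xi_{k-1}$ nor Lemma~\ref{lemma.nesterov.results}(e), and your gradient coefficient $2(n+2)$ is actually slightly sharper before loosening. What the paper's approach buys is a template that transfers to the multiplicative-noise case (Lemma~\ref{lemma.s_mu.square.bound}), where the same polarization trick is reused and the $\mu$-dependent term from \Ref{eqn.exp.of.g_mu.square} must be tracked separately because $\mu_k$ is random there. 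That the two routes land on the identical constant $C_2$ is a pleasant coincidence of the optimal $\mu^*$ balancing the two terms in \Ref{eq:absgenbound}.
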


\begin{proof} 
Let $g_0(x_k) = \langle \nabla f(x_k), u_k\rangle u_k  $. Then
(\ref{eqn.opt.error.bound.add}) implies that
\begin{eqnarray}\label{eq.s_mu.1}
\E[\|s_{\mu_k}\|^2-2\langle s_{\mu_k},g_0(x_k)\rangle +\|g_0(x_k)\|^2]\le C_1,
\end{eqnarray}
where $C_1=\sqrt{2}L_1\sigma_a \sqrt{n(n+6)^3}$.

The stochastic gradient-free oracle $s_{\mu_k}$ in (\ref{eq:gfo}) is a random
approximation of the gradient $\nabla f(x_k)$. Furthermore, the expectation of
$s_{\mu_k}$ with
respect to $\xi_k$ and $\xi_{k-1}$ yields the forward-difference
approximation of the derivative of $f$ in the direction $u_k$ at $x_k$:
\begin{eqnarray}\label{eq.s_mu.exp}
\E_{\xi_k,\xi_{k-1}}[s_{\mu_k}] = \frac{f(x_k+\mu_k u_k)-f(x_k)}{\mu_k}u_k =
g_\mu(x_k). 
\end{eqnarray}

Combining (\ref{eq.s_mu.1}) and (\ref{eq.s_mu.exp}) yields
\begin{eqnarray}
 \E \left[ \| s_{\mu_k}\|^2\right]&\le&  \E[ 2 \langle s_{\mu_k},g_0(x_k)\rangle - 
\|g_0(x_k)\|^2] + C_1\nonumber\\
&\overset{(\ref{eq.s_mu.exp} )}{=}&   \E_{u_k}[2\langle g_{\mu}(x_k) ,
g_0(x_k)\rangle-  \|g_0(x_k)\|^2]    + C_1\nonumber\\
&=& \E_{u_k}[-\| g_0(x_k) -g_{\mu}(x_k) \|^2  + \| g_{\mu}(x_k) \|^2 ]+ C_1\nonumber\\
&\le & \E_{u_k}[ \| g_{\mu}(x_k) \|^2] + C_1\nonumber\\
&\overset{(\ref{eqn.exp.of.g_mu.square} )}{\le}& 2(n+4)\|\nabla f(x_k)\|+C_2,\nonumber 
\end{eqnarray}
where $C_2 = C_1  +\frac{\mu_k^2}{2}L_1^2(n+6)^3 =2\sqrt{2}L_1\sigma_a
\sqrt{n(n+6)^3}.$
\end{proof}

We are now ready to show convergence of the algorithm. Denote $x^*\in
\mathbb{R}^n$ a
minimizer associated with $f^*=f(x^*)$. Denote by $\mathcal{U}_{k}=
\{u_1,\cdots,u_k  \}$ the set of i.i.d. random variable realizations attached to
each iteration of Algorithm 1. Similarly, let $\mathcal{P}_{k}=\{\xi_0,\cdots,
\xi_k\}$. Define $\phi_0=f(x_0)$ and
$\phi_k=\E_{\mathcal{U}_{k-1},\mathcal{P}_{k-1}}[f(x_k)]$ for $k\ge 1$.

\begin{theorem} \label{convergence_add}
Let Assumptions \ref{assumption_u}, \ref{assumption_f_add}, and
\ref{assumption_add} hold. Let the sequence $\{ x_k \}_{k\ge0}$
be generated by Algorithm 1 with the smoothing stepsize $\mu_k$ set as
$\mu^*$ in 
(\ref{eqn.opt.mu}). If the fixed step length is $h_k = h=\frac{1}{4L_1(n+4)}$
for all $k$, then for any $N\ge 0$, we have
$$\frac{1}{N+1}\sum_{k=0}^N(\phi_k-f^*)\le \frac{4L_1(n+4)}{N+1}\|x_0-x^*\|^2
+\frac{3\sqrt{2}}{5}\sigma_a(n+4).$$
\end{theorem}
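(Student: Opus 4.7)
\textbf{Proof proposal for Theorem~\ref{convergence_add}.}
The plan is to follow the standard Nesterov-style random-search analysis: use $r_k := \mathbb{E}[\|x_k - x^*\|^2]$ as a Lyapunov-type quantity, derive a one-step recursion, and then telescope. Concretely, starting from the update $x_{k+1}=x_k-h\,s_{\mu_k}$, I would expand
\[
\|x_{k+1}-x^*\|^2 \;=\; \|x_k-x^*\|^2 \;-\; 2h\,\langle s_{\mu_k},\,x_k-x^*\rangle \;+\; h^2\|s_{\mu_k}\|^2
\]
and take expectation over all random variables collected so far.

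For the cross term, I would condition first on $(\xi_k,\xi_{k-1})$ to replace $s_{\mu_k}$ by the finite-difference gradient $g_\mu(x_k)$ via (\ref{eq.s_mu.exp}), then condition on $u_k$ to replace $g_\mu(x_k)$ by $\nabla f_\mu(x_k)$ via (\ref{eqn.exp.of.g_mu}). Convexity of $f$ is inherited by $f_\mu$, so $\langle \nabla f_\mu(x_k), x_k-x^*\rangle \ge f_\mu(x_k)-f_\mu(x^*)$. Combining (\ref{eq.f_mu.1}) applied at $x_k$ with (\ref{eq.f_mu.2}) applied at $x^*$ gives the lower bound $f_\mu(x_k)-f_\mu(x^*) \ge f(x_k)-f^* - \tfrac{\mu^2}{2}L_1 n$, which (after outer expectation) becomes $\phi_k - f^* - \tfrac{\mu^2}{2}L_1 n$.

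For the squared term, Lemma~\ref{lemma.s_mu.square} gives $\mathbb{E}[\|s_{\mu_k}\|^2]\le 2(n+4)\|\nabla f(x_k)\|^2 + C_2$, and (\ref{eq:L1third}) converts $\|\nabla f(x_k)\|^2$ into $2L_1(f(x_k)-f^*)$. Putting the two bounds together yields the recursion
\[
r_{k+1} \;\le\; r_k \;-\; 2h(\phi_k-f^*) \;+\; h\,\mu^2 L_1 n \;+\; 4h^2 L_1(n+4)(\phi_k-f^*) \;+\; h^2 C_2.
\]
The calibration $h=\tfrac{1}{4L_1(n+4)}$ is precisely the one that makes $4h^2 L_1(n+4)=h$, so the coefficient of $(\phi_k-f^*)$ collapses to $-h$. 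Rearranging and summing over $k=0,\dots,N$ telescopes the $r_k$ terms; dividing by $h(N+1)$ yields
\[
\frac{1}{N+1}\sum_{k=0}^{N}(\phi_k-f^*) \;\le\; \frac{\|x_0-x^*\|^2}{h(N+1)} \;+\; \mu^{*2}L_1 n \;+\; h\,C_2,
\]
and the leading term is exactly $\tfrac{4L_1(n+4)}{N+1}\|x_0-x^*\|^2$.

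The main obstacle I anticipate is verifying the clean form of the residual constant: one must show that
\[
\mu^{*2}L_1 n \;+\; h\,C_2 \;\le\; \tfrac{3\sqrt{2}}{5}\sigma_a(n+4).
\]
Plugging $\mu^{*2}=\tfrac{2\sqrt{2}\sigma_a\sqrt{n}}{L_1(n+6)^{3/2}}$ and $C_2=2\sqrt{2}L_1\sigma_a\sqrt{n(n+6)^3}$ turns the left-hand side into $\sqrt{2}\,\sigma_a\bigl[\tfrac{2 n^{3/2}}{(n+6)^{3/2}} + \tfrac{\sqrt{n}(n+6)^{3/2}}{2(n+4)}\bigr]$, and the work is then a dimension-counting estimate using $n\ge 1$ (bounding $n^{3/2}/(n+6)^{3/2}\le 1$ and $\sqrt{n}(n+6)^{3/2}/(n+4)$ by a small multiple of $n+4$) to collect everything into the advertised $\tfrac{3\sqrt{2}}{5}(n+4)$ factor. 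Everything else is bookkeeping once the key cancellation provided by $h=\tfrac{1}{4L_1(n+4)}$ is in place.
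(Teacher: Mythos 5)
Your derivation is, step for step, the paper's own proof: the same expansion of $\|x_{k+1}-x^*\|^2$, the same tower-property treatment of the cross term via (\ref{eq.s_mu.exp}) and (\ref{eqn.exp.of.g_mu}) followed by convexity of $f_\mu$ and the corrections (\ref{eq.f_mu.1})--(\ref{eq.f_mu.2}), the same use of Lemma~\ref{lemma.s_mu.square} with (\ref{eq:L1third}), the same calibration $4h^2L_1(n+4)=h$, and the same telescoping. Your residual term $\sqrt{2}\sigma_a\bigl[\tfrac{2n^{3/2}}{(n+6)^{3/2}}+\tfrac{\sqrt{n}(n+6)^{3/2}}{2(n+4)}\bigr]$ is exactly the paper's $4L_1(n+4)C_3=2\sqrt{2}\sigma_a(n+4)g_1(n)$, so the target inequality reduces to $g_1(n)\le 3/10$.

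The one place where your sketch would not survive execution is the closing estimate. The inequality is true but tight: $2g_1(n)$ peaks at about $0.587$ near $n=9,10$ against the budget $3/5$, a margin of roughly two percent. In particular, bounding $n^{3/2}/(n+6)^{3/2}\le 1$ (i.e., replacing the first bracketed term by $2$) already overspends the budget for small and moderate $n$: at $n=1$ the second term alone is $\approx 1.85$, so after subtracting it from $\tfrac{3}{5}(n+4)=3$ only about $1.15$ remains, which is less than $2$. The two terms must be maximized jointly; the paper does this by showing $g_1$ increases up to $n=9$ and decreases from $n=10$, so $g_1(n)\le\max\{g_1(9),g_1(10)\}\approx 0.294\le 0.3$. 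With that substitution for your "dimension-counting" step, the argument is complete and identical to the published one.
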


\begin{proof}
We start with deriving the expectation of the change in $x$ of each step, that
is,
$\E[r_{k+1}^2]-r_k^2$, where $r_k = \| x_k-x^* \|$. First, 
\begin{eqnarray*}
r_{k+1}^2 & = &\| x_k - h_k s_{\mu_k} -x^* \|^2\nonumber\\
& =& r_k^2-2h_k\langle s_{\mu_k}, x_k-x^*\rangle + h_k^2 \| s_{\mu_k}  \|^2.
\end{eqnarray*}
$\E[s_{\mu_k}]$ can be derived by using (\ref{eqn.exp.of.g_mu}) and
(\ref{eq.s_mu.exp}). $\E[\| s_{\mu_k} \|^2]$ is derived in Lemma
\ref{lemma.s_mu.square}. Hence,
\begin{eqnarray*}
\E\left[ r_{k+1}^2 \right]&\le &  r_k^2 -2h_k \langle \nabla f_\mu(x_k),x_k-x^*
\rangle +h_k^2[2(n+4)\|\nabla f(x_k)\|^2+C_2 ].
 \nonumber
\end{eqnarray*}
By using (\ref{eq:convexity}), (\ref{eq.f_mu.1}), and (\ref{eq:L1third}), we
derive
\begin{eqnarray*}
\E\left[ r_{k+1}^2 \right]&\le &  r_k^2 -2h_k ( f(x_k)-f_\mu(x^*)
)+4h_k^2L_1(n+4)(f(x_k)-f(x^*))+h_k^2C_2 . \nonumber
\end{eqnarray*}
Combining this expression with (\ref{eq.f_mu.2}), which bounds the error
between $f_\mu(x)$ and
$f(x)$, we obtain
\begin{eqnarray*}
\E\left[ r_{k+1}^2 \right]&\le &  r_k^2 -2h_k(1-2h_k L_1(n+4)) ( f(x_k)-f^* )+C_3, \nonumber
\end{eqnarray*}
where $C_3 =h_k^2C_2 +2h_k \frac{\mu_k^2}{2}L_1n=h_k^2C_2 + 
2\sqrt{2}h_k\sigma_a \sqrt{\frac{n^3}{(n+6^3)}}.$

Let $h_k = h=\frac{1}{4L_1(n+4)}$. Then,
\begin{eqnarray}
\E\left[ r_{k+1}^2 \right]&\le &  r_k^2 -\frac{ f(x_k)-f^*}{4L_1(n+4)} +C_3, 
\label{eq.per.step.result}
\end{eqnarray}
where $C_3=\frac{\sqrt{2}\sigma_a}{2L_1}g_1(n)$ and $g_1(n)= \frac{\sqrt{n(n+6)^3}}{4(n+4)^2} + \frac{1}{n+4}\sqrt{\frac{n^3}{(n+6)^3}}$. By showing that $g_1'(n)<0$ for all $n\ge 10$ and $g_1'(n)>0$ for all $n\le 9$, we can prove that $g_1(n)\le \max\{g(9),g(10)\}=\max\{ 0.2936,0.2934 \}\le 0.3$. Hence, $C_3\le \frac{3\sqrt{2}\sigma_a}{20L_1}$.

Taking the expectation in $\mathcal{U}_{k}$ and $\mathcal{P}_{k}$, we have
$$ \E_{\mathcal{U}_{k},\mathcal{P}_{k}} [r^2_{k+1}] \le
\E_{\mathcal{U}_{k-1},\mathcal{P}_{k-1}}[r_k^2 ]-\frac{ \phi_k-f^*}{4L_1(n+4)}
+\frac{3\sqrt{2}\sigma_a}{20L_1}.$$
Summing these inequalities over $k=0,\cdots,N$ and dividing by $N+1$, we obtain
the desired result.
\end{proof}

The bound in Theorem~\ref{convergence_add} is valid also for $\hat{\phi}_N = 
\E_{\mathcal{U}_{k-1},\mathcal{P}_{k-1}} \left[f(\hat{x}_N)\right] $, where 
$\hat{x}_N =\arg \min_x  \{ f(x) : x\in \{ x_0,\cdots,x_N \} \} $. In this case,
\begin{eqnarray}
\E_{\mathcal{U}_{k-1},\mathcal{P}_{k-1}} \left[f(\hat{x}_N)\right] -f^* &\le & \E_{\mathcal{U}_{k-1},\mathcal{P}_{k-1}}   \left[  \dfrac{1}{N+1}\sum_{k=0}^N (\phi_k-f^*)  \right] \nonumber\\
&\le & \dfrac{4L_1(n+4)}{N+1}\| x_0-x^* \|^2+\frac{3\sqrt{2}}{5}\sigma_a(n+4).\nonumber
\end{eqnarray}

Hence, in order to achieve a final accuracy of $\epsilon$ for $\hat{\phi}_N$
(that is, $\hat{\phi}_N-f^*\le \epsilon$), the allowable absolute noise in the
objective function has to satisfy
$\sigma_a\le  \dfrac{5\epsilon}{6\sqrt{2}(n+4)}$. Furthermore, under this bound
on the allowable noise, this $\epsilon$ accuracy can be ensured by {\sf STARS}
in 
\begin{eqnarray}
N=\dfrac{8(n+4)L_1R^2}{\epsilon}-1\sim \mathcal{O}\left(  \dfrac{n}{\epsilon} L_1 R^2\right)
\label{eq:N_add}
\end{eqnarray} 
iterations, where $R^2$ is an upper bound on the squared Euclidean distance
between the starting point and the optimal solution: $\| x_0-x^* \|^2\le
R^2$. 
In other words, given an optimization problem that has bounded absolute noise of
variance $\sigma_a^2$, the best accuracy that can be ensured by {\sf STARS} is 
\begin{eqnarray}
\epsilon_{\mbox{pred}} \ge  \frac{6 \sqrt{2}\sigma_a(n+4)}{5},
\label{eq:predepsadd}
\end{eqnarray}
and we can solve this noisy problem in $\mathcal{O}\left( 
\dfrac{n}{\epsilon_{\mbox{pred}}} L_1 R^2 \right)$ iterations. Unsurprisingly,
a price must be paid for having access only to noisy realizations, and
this price is that arbitrary accuracy cannot be reached in the
noisy setting. 

\section{Multiplicative Noise}\label{sec:mul.noise}

A \textit{multiplicative noise} model is described by
\begin{equation}
\tilde{f}(x;\xi)=f(x)[1+\nu(x;\xi)] = f(x)+f(x)\nu(x;\xi).
\label{eq:funcmul_bounded}
\end{equation}
In practice, $|\nu|$ is bounded by something smaller (often much smaller) than
$1$. A canonical example is when $f$ corresponds to a Monte Carlo integration, 
with the a stopping criterion based on the value $f(x)$.
Similarly, if $f$ is simple and computed in double precision,
the relative errors are roughly $10^{-16}$; in single precision, the errors are
roughly $10^{-8}$ and in half precision we get errors of roughly $10^{-4}$.

Formally,  we make the following assumptions in our analysis of {\sf STARS} for 
the problem (\ref{eqn.prob.chp3}) with multiplicative noise. 

\begin{assumption}[Assumption about $f$]\label{assumption_f_mul} 
$f$ is continuously differentiable and convex and has Lipschitz constant $L_0$.
$\nabla f$ has Lipschitz
constant $L_1$.
\end{assumption}

\begin{assumption}[Assumption about multiplicative
$\nu$]\label{assumption_bounded_noise}
\item[1.] $\nu$ is i.i.d., with zero mean and bounded variance; that is, $\E
[\nu]
=0$, $\sigma_r^2 = \mbox{Var}(\nu)>0$.
\item[2.] The expectation of the signal-to-noise ratio is bounded; that is,
$\E[\frac{1}{1+\nu}]\le b$.
\item[3.] The support of $\nu$ (i.e.,
the range of values that $\nu$ can take with positive probability) is bounded 
by $\pm a$, where $a<1$.
\end{assumption}

The first part of Assumption~\ref{assumption_bounded_noise} is analogous 
to that in Assumption~\ref{assumption_add} and guarantees that the 
distribution of $\nu$ is independent of $x$. Although not specifying a 
distributional form for $\nu$ (with respect to $\xi$), the final two parts of  
Assumption~\ref{assumption_bounded_noise} are made to simplify the presentation 
and rule out cases where the noise completely corrupts the function.


\subsection{Noise and Finite Differences}

Analogous to Theorem \ref{thm.optstep.add}, Theorem \ref{optstepmul_bounded} 
shows how to compute the near-optimal stepsizes in the multiplicative 
noise setting. 

\begin{theorem} \label{optstepmul_bounded}
Let Assumptions~\ref{assumption_f_mul}~and~\ref{assumption_bounded_noise} hold. 
If a forward-difference parameter is chosen as 
\begin{equation}\label{eqn.opt.mu.bounded}
  \mu^* = C_4\sqrt{|f(x) |},\quad \mbox{where } C_4 =\left[ \frac{16\sigma_r^2n }{L_1^2(1+3\sigma_r^2) (n+6)^3}\right]^{\frac{1}{4}},\nonumber
\end{equation}
 then for any $x\in \R^n$ we have 
\begin{equation}\label{eqn.optstepmul_bounded}
\E_{u,\xi_1,\xi_2}[ \mathcal{E}(\mu^*) ] \le 2L_1\sigma_r \sqrt{(1+3\sigma_r^2)n(n+6)^3}|f(x)|+3L_0^2\sigma_r^2(n+4)^2. 
\end{equation}
\end{theorem}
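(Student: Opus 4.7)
The plan is to mirror the additive-noise proof of Theorem~\ref{thm.optstep.add}, decomposing the finite-difference error into a deterministic truncation piece controlled by $L_1$ and a stochastic piece whose variance is controlled by $\sigma_r^2$. The essential new feature compared with the additive case is that the noise term now carries the multiplicative factor $f(x+\mu u)$, which must be re-expressed in terms of $|f(x)|$ using the $L_0$-Lipschitz continuity of $f$ guaranteed by Assumption~\ref{assumption_f_mul}.

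First, using $\tilde{f}(y;\xi)=f(y)(1+\nu(y;\xi))$, I would write
\[
\tfrac{\tilde{f}(x+\mu u;\xi_1)-\tilde{f}(x;\xi_2)}{\mu}-\langle \nabla f(x),u\rangle \;=\; A + B,
\]
with
\[
A=\tfrac{f(x+\mu u)-f(x)}{\mu}-\langle \nabla f(x),u\rangle, \qquad B=\tfrac{f(x+\mu u)\,\nu(x+\mu u;\xi_1)-f(x)\,\nu(x;\xi_2)}{\mu}.
\]
By (\ref{eq:L1second}) one has $|A|\le \tfrac{L_1\mu}{2}\|u\|^2$, and by Assumption~\ref{assumption_bounded_noise} together with the independence of $\xi_1,\xi_2,u$, we get $\E_{\xi_1,\xi_2}[B]=0$ and $\E_{\xi_1,\xi_2}[B^2]=\tfrac{\sigma_r^2}{\mu^2}\bigl(f(x+\mu u)^2+f(x)^2\bigr)$. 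Since $A$ is deterministic in $(\xi_1,\xi_2)$ the cross term $\E_\xi[AB]$ vanishes, so
\[
\E_{\xi_1,\xi_2}[(A+B)^2]\le \tfrac{L_1^2\mu^2}{4}\|u\|^4 + \tfrac{\sigma_r^2}{\mu^2}\bigl(f(x+\mu u)^2+f(x)^2\bigr).
\]

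Next, I would use the $L_0$-Lipschitz bound $|f(x+\mu u)|\le |f(x)|+L_0\mu\|u\|$ inside an elementary $(a+b)^2$ expansion to dominate $f(x+\mu u)^2$ by a sum of a $\mu$-free piece proportional to $f(x)^2$ and a piece of the form $L_0^2\mu^2\|u\|^2$. Multiplying $\E_{\xi_1,\xi_2}[(A+B)^2]$ by $\|u\|^2$ to recover $\mathcal{E}(\mu)$, taking $\E_u$, and invoking Lemma~\ref{lemma.nesterov.results}(a) (with $M_2=n$, $M_4\le(n+4)^2$, $M_6\le(n+6)^3$) produces an upper bound of the shape
\[
\E_{u,\xi_1,\xi_2}[\mathcal{E}(\mu)] \;\le\; \alpha\,\mu^2 \;+\; \tfrac{\beta}{\mu^2} \;+\; \gamma,
\]
where $\alpha=\tfrac14 L_1^2(1+3\sigma_r^2)(n+6)^3$, $\beta=4\sigma_r^2 n\,f(x)^2$, and $\gamma=3L_0^2\sigma_r^2(n+4)^2$; the $(1+3\sigma_r^2)$ modifier in $\alpha$ and the coefficient $3$ in $\gamma$ arise precisely from how the $L_0^2\mu^2\|u\|^2$ remainder of the Lipschitz bound is folded back into the $\mu^2$- and constant-coefficient groupings.

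Finally, the RHS is uniformly convex in $\mu^2$, so its unique minimizer satisfies $(\mu^*)^4=\beta/\alpha$, which after arithmetic reduces to $\mu^*=C_4\sqrt{|f(x)|}$; at the optimum $\alpha\mu^2+\beta/\mu^2 = 2\sqrt{\alpha\beta}$, and plugging this in recovers the $2L_1\sigma_r\sqrt{(1+3\sigma_r^2)n(n+6)^3}|f(x)|+3L_0^2\sigma_r^2(n+4)^2$ bound of (\ref{eqn.optstepmul_bounded}). I expect the main obstacle to be the bookkeeping in the step that controls $f(x+\mu u)^2$: one has to select the splitting of $(|f(x)|+L_0\mu\|u\|)^2$ — most naturally through a Young-type inequality — so that part of the resulting $L_0^2\mu^2\|u\|^2$ mass lands inside the $\mu^2$-coefficient $\alpha$ (producing the $1+3\sigma_r^2$ factor) while the remainder lands in $\gamma$, with the constants coming out in exactly the form stated.
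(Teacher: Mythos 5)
Your decomposition into a deterministic truncation term $A$ and a zero-mean noise term $B$ is sound and matches the spirit of the paper's proof, and the identity $\E_{\xi_1,\xi_2}[(A+B)^2]=A^2+\E[B^2]$ with $\E[B^2]=\tfrac{\sigma_r^2}{\mu^2}\bigl(f(x+\mu u)^2+f(x)^2\bigr)$ is correct. The gap is in the step you yourself flag as the main obstacle: the $(1+3\sigma_r^2)$ factor cannot arise the way you describe. If you bound $f(x+\mu u)^2$ via $\bigl(|f(x)|+L_0\mu\|u\|\bigr)^2$ and Young's inequality, the $L_0^2\mu^2\|u\|^2$ remainder, once divided by the $\mu^2$ in $\E[B^2]$, is $\mu$-\emph{independent}; it lands entirely in the constant term $\gamma$ and contributes nothing to the $\mu^2$-coefficient $\alpha$. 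Your $\alpha$ is therefore $\tfrac14 L_1^2(n+6)^3$ without the $(1+3\sigma_r^2)$ factor, the minimizer of your bound is $\bigl[\tfrac{4(2+t)\sigma_r^2 n}{L_1^2(n+6)^3}\bigr]^{1/4}\sqrt{|f(x)|}$ rather than $C_4\sqrt{|f(x)|}$, and the claimed "arithmetic reduction" to the stated $\mu^*$ fails. (The statement fixes a specific $\mu^*$, so a correct proof must verify the bound at that point; your bound, evaluated at the stated $\mu^*$ with the Young parameter $t=2$, does in fact imply (\ref{eqn.optstepmul_bounded}) because $\tfrac{1}{\sqrt{1+3\sigma_r^2}}+\sqrt{1+3\sigma_r^2}\le 2\sqrt{1+3\sigma_r^2}$ — but that verification is not what you wrote, and it is needed.)

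The paper gets the constants differently: it expands $f(x+\mu u)$ to first order with the $L_1$ Taylor remainder, so the coefficient multiplying $\nu(x+\mu u;\xi_1)$ is $\tfrac{f(x)}{\mu}+\nabla f(x)^\top u+\tfrac{\mu L_1}{2}\|u\|^2$. After $(a+b+c)^2\le 3(a^2+b^2+c^2)$, the $\tfrac{\mu L_1}{2}\|u\|^2$ piece contributes $3\sigma_r^2\cdot\tfrac{\mu^2L_1^2}{4}\|u\|^4$ to the variance, which combines with the squared bias $\tfrac{\mu^2L_1^2}{4}\|u\|^4$ to produce the $(1+3\sigma_r^2)$ factor in $\alpha$; the $\nabla f(x)^\top u$ piece, bounded via $\|\nabla f(x)\|\le L_0$ and $M_4\le(n+4)^2$, produces the $3L_0^2\sigma_r^2(n+4)^2$ term. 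So the $L_0$ enters through the gradient bound, not through Lipschitz continuity of $f$ itself. To repair your argument, either switch to this second-order expansion of $f(x+\mu u)$ inside the noise coefficient, or keep your route and replace the "minimize and match" step with a direct substitution of the stated $\mu^*$ into your upper bound.
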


\begin{proof}
By using (\ref{eq:funcmul_bounded}) and (\ref{eq:L1second}), we derive
\begin{eqnarray*}
\mathcal{E}(\mu)&\le& \left\| \frac{f(x+\mu u)\nu(x+\mu u;\xi_1)-f(x)\nu(x;\xi_2)}{\mu} u+ \frac{\mu L_1}{2} \|u\|^2u \right\|^2  \\
&\le& \left( \frac{f(x+\mu u)\nu(x+\mu u;\xi_1)-f(x)\nu(x;\xi_2)}{\mu} + \frac{\mu L_1}{2} \|u\|^2 \right)^2 \|u\|^2 .\end{eqnarray*}
Again applying (\ref{eq:L1second}), we get $\mathcal{E}(\mu)\le X^2\|u\|^2  $,
where
 \begin{eqnarray*}
X&=&\frac{f(x+\mu u)\nu(x+\mu u;\xi_1)-f(x)\nu(x;\xi_2)}{\mu} + \frac{\mu L_1}{2} \|u\|^2\\
&\le&\left(\frac{f(x)}{\mu} +\nabla f(x)^Tu + \frac{\mu L_1}{2} \|u\|^2 \right)\nu(x+\mu u;\xi_1) - \frac{f(x)}{\mu} \nu(x;\xi_2)+ \frac{\mu L_1}{2} \|u\|^2
.\end{eqnarray*}

The expectation of $X$ with respect to $\xi_1$ and $\xi_2$ is 
$$E_{\xi_1,\xi_2}[X]= \frac{\mu L_1}{2} \|u\|^2$$
and the corresponding variance is 
\begin{eqnarray*}
 \mbox{Var} (X)&=& \left(\frac{f(x)}{\mu} +\nabla f(x)^Tu + \frac{\mu L_1}{2} \|u\|^2 \right)^2\sigma_r^2 + \frac{f^2(x)}{\mu^2} \sigma_r^2\\
 &\le& \left(\frac{3f^2(x)}{\mu^2} +3(\nabla f(x)^Tu)^2 + \frac{3\mu^2 L_1^2}{4} \|u\|^4 \right)\sigma_r^2 + \frac{f^2(x)}{\mu^2} \sigma_r^2\\
 &=&\left(\frac{4f^2(x)}{\mu^2} +3(\nabla f(x)^Tu)^2 + \frac{3\mu^2 L_1^2}{4} \|u\|^4 \right)\sigma_r^2 ,
\end{eqnarray*}
where the inequality holds because $(a+b+c)^2\le 3a^2+3b^2+3c^2$ for any $a,b,c$.
Since  $\E[X^2] =\mbox{Var}(X)+(\E[X])^2$, we have that
\begin{eqnarray*}
\E_{\xi_1,\xi_2}[X^2] &\le& \frac{\mu^2 L_1^2(1+3\sigma_r^2)}{4} \|u\|^4+\frac{4\sigma_r^2}{\mu^2} f^2(x)+3(\nabla f(x)^Tu)^2 \sigma_r^2\\
&\le& \frac{\mu^2 L_1^2(1+3\sigma_r^2)}{4} \|u\|^4+\frac{4\sigma_r^2}{\mu^2} 
f^2(x)+3L_0^2 \sigma_r^2 \|u\|^2.
\end{eqnarray*}
Hence, we can derive 
\begin{eqnarray*}
\E[\mathcal{E}(\mu)] &\le& \E_u[  \E_{\xi_1,\xi_2}[   X^2\|u\|^2 ]  ]\\
&=& \E_u[ \|u\|^2 \E_{\xi_1,\xi_2}[   X^2 ]  ]\\
&\le& \E_u\left[  \frac{\mu^2 L_1^2(1+3\sigma_r^2)}{4} 
\|u\|^6+\frac{4\sigma_r^2}{\mu^2} f^2(x)\|u\|^2+3L_0^2 \sigma_r^2 \|u\|^4    
\right].
\end{eqnarray*}

By using (\ref{eq:M_p.le2}), (\ref{eq:M_p.ge2}), and this last expression, we
get
\begin{eqnarray*}
\E[\mathcal{E}(\mu)] &\le&  \frac{\mu^2 L_1^2(1+3\sigma_r^2)}{4} 
(n+6)^3+\frac{4\sigma_r^2n}{\mu^2} f^2(x)+3L_0^2 \sigma_r^2 (n+4)^2  .
\end{eqnarray*}

The right-hand side of this expression is uniformly convex in $\mu$ and 
attains its  global minimum at $\mu^* =C_4\sqrt{|f(x) |}$; the corresponding  
expectation of the least-squares error is 
$$\E_{u,\xi_1,\xi_2}[ \mathcal{E}(\mu^*) ] \le 2L_1\sigma_r \sqrt{(1+3\sigma_r^2)n(n+6)^3}|f(x)|+3L_0^2\sigma_r^2(n+4)^2. $$
\end{proof}

Unlike for the absolute noise case of Section~\ref{sec:additive.noise}, the 
optimal $\mu$ value in Theorem~\ref{optstepmul_bounded} is not 
independent of $x$. Furthermore, letting $\mu_k=\mu^*=C_4\sqrt{|f(x) |}$ 
assumes that $f$ is known. Unfortunately, we have access to $f$ only through 
$\tilde{f}$. However, we can compute an estimate, $\tilde{\mu}$, of $\mu^*$ 
by substituting $f$ with $\tilde{f}$ and still derive an error bound. To
simplify the derivations, we introduce another random variable, 
$\xi_3$, independent of $\xi_1$ and $\xi_2$, to compute $\tilde{\mu}\equiv 
\tilde\mu(x;\xi_3)$. The goal is to obtain an upper bound on 
$\E_{\xi_3}[\E_{\xi_1,\xi_2,u}[\mathcal{E}(\tilde{\mu})]]$, where
$$\mathcal{E}(\tilde\mu)\equiv 
\mathcal{E}(\tilde\mu,x;u,\xi_1,\xi_2,\xi_3)=\left\|  
\frac{\tilde{f}(x+\tilde\mu  ;\xi_1)-\tilde{f}(x;\xi_2)}{\tilde\mu}u-\langle 
\nabla f(x),u\rangle u  \right\|^2.$$ 
This then allows us to proceed with the usual derivations while requiring only 
an additional expectation over $\xi_3$.

\begin{lemma} \label{lemma.tilde.mu.bounded}
Let Assumptions~\ref{assumption_f_mul}~and~\ref{assumption_bounded_noise} hold. 
If a forward-difference parameter is chosen as 
\begin{equation}\label{eqn.tilde.mu.bounded}
  \tilde\mu = C_4\sqrt{|\tilde{f}(x;\xi_3) |},\quad \mbox{where } C_4  =\left[ \frac{16\sigma_r^2n }{L_1^2(1+3\sigma_r^2) (n+6)^3}\right]^{\frac{1}{4}},
\end{equation}
then for any $x\in \R^n$, we have 
\begin{equation}\label{eqn.lemma.tilde.mu.bounded}
\E_{u,\xi_1,\xi_2,\xi_3}[ \mathcal{E}(\tilde{\mu}) ] \le(1+b)L_1\sigma_r \sqrt{(1+3\sigma_r^2)n(n+6)^3}|f(x)|+3L_0^2\sigma_r^2(n+4)^2.
\end{equation}
\end{lemma}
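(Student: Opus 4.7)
The plan is to reuse the intermediate inequality obtained inside the proof of Theorem~\ref{optstepmul_bounded}, which holds for \emph{any} positive $\mu$, and then to take an additional expectation over $\xi_3$ after substituting the random choice $\mu=\tilde\mu(x;\xi_3)$. Concretely, just before the minimization over $\mu$ is performed, the proof of Theorem~\ref{optstepmul_bounded} establishes
$$\E_{u,\xi_1,\xi_2}[\mathcal{E}(\mu)] \;\le\; \frac{\mu^2 L_1^2(1+3\sigma_r^2)}{4}(n+6)^3 + \frac{4\sigma_r^2 n}{\mu^2}f^2(x) + 3L_0^2\sigma_r^2(n+4)^2,$$
and this is the natural starting point because $\tilde\mu$ depends only on $\xi_3$, while $\xi_3$ is independent of $(u,\xi_1,\xi_2)$. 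Conditional on $\xi_3$, the displayed inequality therefore continues to hold with $\mu$ replaced by $\tilde\mu$.

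Next I would invoke the bounded-support assumption $|\nu|\le a<1$ in Assumption~\ref{assumption_bounded_noise}(3) to conclude that $1+\nu(x;\xi_3)>0$ almost surely, so that $|\tilde f(x;\xi_3)|=|f(x)|\,(1+\nu(x;\xi_3))$ and hence
$$\tilde\mu^{\,2} = C_4^2|f(x)|(1+\nu(x;\xi_3)), \qquad \frac{1}{\tilde\mu^{\,2}} = \frac{1}{C_4^2|f(x)|(1+\nu(x;\xi_3))}.$$
Taking $\E_{\xi_3}$ of the intermediate bound term by term, I would use $\E_{\xi_3}[1+\nu(x;\xi_3)]=1$ (Assumption~\ref{assumption_bounded_noise}(1)) on the $\mu^2$ term and the signal-to-noise bound $\E_{\xi_3}[1/(1+\nu(x;\xi_3))]\le b$ (Assumption~\ref{assumption_bounded_noise}(2)) on the $1/\mu^2$ term; the constant $3L_0^2\sigma_r^2(n+4)^2$ is unaffected. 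This yields
$$\E_{u,\xi_1,\xi_2,\xi_3}[\mathcal{E}(\tilde\mu)] \;\le\; \frac{C_4^2 L_1^2(1+3\sigma_r^2)(n+6)^3}{4}|f(x)| + \frac{4b\sigma_r^2 n}{C_4^2}|f(x)| + 3L_0^2\sigma_r^2(n+4)^2.$$

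Finally I would substitute $C_4^2 = 4\sigma_r\sqrt{n}/(L_1\sqrt{(1+3\sigma_r^2)(n+6)^3})$ from (\ref{eqn.tilde.mu.bounded}) into the first two terms; a direct simplification shows they equal $L_1\sigma_r\sqrt{(1+3\sigma_r^2)n(n+6)^3}|f(x)|$ and $bL_1\sigma_r\sqrt{(1+3\sigma_r^2)n(n+6)^3}|f(x)|$ respectively, combining to the announced $(1+b)$ prefactor of (\ref{eqn.lemma.tilde.mu.bounded}). The only genuinely new ingredient beyond Theorem~\ref{optstepmul_bounded} is that $\tilde\mu^{\,2}$ now carries the random factor $1+\nu$, so the $1/\mu^2$ term is inflated by $\E[1/(1+\nu)]$ instead of being evaluated at the deterministic minimizer; this Jensen-style inflation is the one place where care is needed, and it is exactly why Assumption~\ref{assumption_bounded_noise}(2) is required. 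Note that $b\ge 1$ by Jensen's inequality, so the $(1+b)$ factor is unavoidable in this argument.
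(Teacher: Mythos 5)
Your proposal is correct and follows essentially the same route as the paper: both reuse the pre-minimization bound from Theorem~\ref{optstepmul_bounded}, substitute $\tilde\mu^2=C_4^2|f(x)|(1+\nu(x;\xi_3))$, and take $\E_{\xi_3}$ using $\E[\nu]=0$ on the $\mu^2$ term and $\E[1/(1+\nu)]\le b$ on the $1/\mu^2$ term. The only cosmetic difference is that the paper factors out the common coefficient and bounds $\E_{\xi_3}[1+\nu+1/(1+\nu)]\le 1+b$ in one step, whereas you evaluate the two terms separately; the arithmetic is identical.
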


\begin{proof}
\begin{eqnarray}
\E[\mathcal{E}(\tilde{\mu})]& =& \E_{\xi_3}[\E_{u,\xi_1,\xi_2}[\mathcal{E}(\tilde{\mu})]]\nonumber\\
&\le& \E_{\xi_3}\left[ \frac{\tilde\mu^2 L_1^2(1+3\sigma_r^2)}{4} 
(n+6)^3+\frac{4\sigma_r^2n}{\tilde\mu^2} f^2(x)+3L_0^2 \sigma_r^2 (n+4)^2 
\right]\nonumber\\
&=& L_1\sigma_r\sqrt{(1+3\sigma_r^2)n(n+6)^3}| f(x)|\E_{\xi_3}\left[  1+\nu(x;\xi_3)+\frac{1}{1+\nu(x;\xi_3)}  \right]+3L_0^2\sigma_r^2(n+4)^2\nonumber\\
&\le& (1+b)L_1\sigma_r\sqrt{(1+3\sigma_r^2)n(n+6)^3}| f(x)|+3L_0^2\sigma_r^2(n+4)^2,\nonumber
\end{eqnarray}
where the last inequality holds by Assumption~\ref{assumption_bounded_noise}
because the expectation of the signal-to-noise ratio is bounded by $b$.
\end{proof}

\paragraph{Remark:} Similar to the additive noise case, Theorem 
\ref{optstepmul_bounded} and Theorem  \ref{lemma.tilde.mu.bounded} do not 
require $f$ to be convex. Hence, (\ref{eqn.optstepmul_bounded}) and 
(\ref{eqn.lemma.tilde.mu.bounded}) both hold in the nonconvex case. However, 
the following convergence rate analysis applies only to the convex case, since 
Lemma \ref{lemma.s_mu.bound} relies on a convexity assumption for $f$.

\subsection{Convergence Rate Analysis}

Let $\mu_k=\tilde{\mu}=C_4\sqrt{|\tilde{f}(x_k;\xi_{k'}) |}$ in 
Algorithm~\ref{alg:STARS}. Before showing the convergence result, we derive 
$\E[\langle s_{\tilde{\mu}},x_k-x^*   \rangle]$ and $\E[\|s_{\tilde{\mu}}\|^2]$, 
where $s_{\tilde{\mu}}$ denotes $  s_\mu(x_k;u_k,\xi_k,\xi_{k-1},\xi_{k'})$ and 
$\E[\cdot] $ denotes the expectation over all random variables $u_k,\xi_k, 
\xi_{k-1}$, and $\xi_{k'}$(i.e.,  $\E[\cdot] = 
\E_{u_k,\xi_k,\xi_{k-1},\xi_{k'}}[\cdot]$), unless otherwise specified. 

\begin{lemma}\label{lemma.s_mu.square.bound}
Let Assumptions~\ref{assumption_f_mul}~and~\ref{assumption_bounded_noise} hold. 
 If $\mu_k=\tilde{\mu}=C_4\sqrt{|\tilde{f}(x_k;\xi_{k'}) |}$, then
\begin{eqnarray*}
\E[\| s_{\tilde{\mu}}\|^2] \le 2(n+4)\| \nabla f(x_k) \|^2 +C_5|f(x_k)|+C_6,
\end{eqnarray*}
where $C_5 =\frac{1}{2}C_4^2L_1^2(n+6)^3+ 
(1+b)L_1\sigma_r\sqrt{(1+3\sigma_r^2)n(n+6)^3}$ and 
$C_6=3L_0^2\sigma_r^2(n+4)^2$.
\end{lemma}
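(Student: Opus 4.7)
The plan is to mirror the proof of Lemma~\ref{lemma.s_mu.square} from the additive case, but with extra care in handling the fact that the smoothing stepsize $\tilde\mu$ is now itself a random variable depending on the auxiliary noise sample $\xi_{k'}$ (independent of $u_k,\xi_k,\xi_{k-1}$).

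First, I would set $g_0(x_k)=\langle\nabla f(x_k),u_k\rangle u_k$ and expand the squared error from Lemma~\ref{lemma.tilde.mu.bounded}, namely $\E[\|s_{\tilde\mu}-g_0(x_k)\|^2]\le C_1'|f(x_k)|+C_6$, with $C_1'=(1+b)L_1\sigma_r\sqrt{(1+3\sigma_r^2)n(n+6)^3}$. Expanding the square and rearranging gives
\begin{equation*}
\E[\|s_{\tilde\mu}\|^2]\le 2\E[\langle s_{\tilde\mu},g_0(x_k)\rangle]-\E[\|g_0(x_k)\|^2]+C_1'|f(x_k)|+C_6.
\end{equation*}

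Next I would use the tower property: conditioning on $u_k$ and $\xi_{k'}$ (so $\tilde\mu$ is frozen), the expectation of $s_{\tilde\mu}$ over $\xi_k,\xi_{k-1}$ collapses the noise and yields $g_{\tilde\mu}(x_k)=\frac{f(x_k+\tilde\mu u_k)-f(x_k)}{\tilde\mu}u_k$, by the same calculation that produced (\ref{eq.s_mu.exp}). Consequently $\E[\langle s_{\tilde\mu},g_0(x_k)\rangle]=\E_{u_k,\xi_{k'}}[\langle g_{\tilde\mu}(x_k),g_0(x_k)\rangle]$, and completing the square exactly as in the additive proof gives $2\E[\langle g_{\tilde\mu},g_0\rangle]-\E[\|g_0\|^2]=-\E[\|g_0-g_{\tilde\mu}\|^2]+\E[\|g_{\tilde\mu}\|^2]\le\E[\|g_{\tilde\mu}\|^2]$.

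The remaining task is to bound $\E[\|g_{\tilde\mu}(x_k)\|^2]$. Here is the one subtle point: Lemma~\ref{lemma.nesterov.results}(e) is stated for a deterministic $\mu$, so I would apply it pointwise in $\xi_{k'}$ (against which $\tilde\mu$ is measurable but $u_k$ is independent), obtaining
\begin{equation*}
\E_{u_k}[\|g_{\tilde\mu}(x_k)\|^2\mid\xi_{k'}]\le 2(n+4)\|\nabla f(x_k)\|^2+\tfrac{\tilde\mu^2}{2}L_1^2(n+6)^3,
\end{equation*}
and then take expectation over $\xi_{k'}$. By construction $\tilde\mu^2=C_4^2|\tilde f(x_k;\xi_{k'})|=C_4^2|f(x_k)|(1+\nu)$, using Assumption~\ref{assumption_bounded_noise}(3) so that $1+\nu>0$ with probability one; hence $\E_{\xi_{k'}}[\tilde\mu^2]=C_4^2|f(x_k)|$ by the zero-mean property of $\nu$.

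Combining everything then gives
\begin{equation*}
\E[\|s_{\tilde\mu}\|^2]\le 2(n+4)\|\nabla f(x_k)\|^2+\Bigl[\tfrac{1}{2}C_4^2L_1^2(n+6)^3+C_1'\Bigr]|f(x_k)|+C_6,
\end{equation*}
which is exactly the claimed bound with $C_5=\frac{1}{2}C_4^2L_1^2(n+6)^3+(1+b)L_1\sigma_r\sqrt{(1+3\sigma_r^2)n(n+6)^3}$. The only real obstacle is the conditioning bookkeeping around $\tilde\mu$: making sure that Lemma~\ref{lemma.nesterov.results}(e) is applied conditionally on $\xi_{k'}$ before integrating, and that the signal-to-noise tail produced by $\tilde\mu^{-2}$ has already been absorbed in Lemma~\ref{lemma.tilde.mu.bounded} via the bound $\E[1/(1+\nu)]\le b$, so that no further use of Assumption~\ref{assumption_bounded_noise}(2) is needed at this stage.
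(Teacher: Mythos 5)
Your proposal is correct and follows essentially the same route as the paper: invoke Lemma~\ref{lemma.tilde.mu.bounded} for the squared error against $g_0$, complete the square to reduce to $\E[\|g_{\tilde\mu}\|^2]$, apply Lemma~\ref{lemma.nesterov.results}(e) conditionally on $\xi_{k'}$, and use $\E_{\xi_{k'}}[\tilde\mu^2]=C_4^2|f(x_k)|$. Your explicit handling of the conditioning on $\xi_{k'}$ and the positivity of $1+\nu$ is slightly more careful than the paper's write-up, but the argument is the same.
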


\begin{proof} Let $g_0(x_k)=\langle \nabla f(x_k),u_k \rangle u_k .$ The bound 
(\ref{eqn.tilde.mu.bounded}) in Theorem~\ref{lemma.tilde.mu.bounded} implies 
that
\begin{eqnarray*}
\E [\| s_{\tilde\mu}- g_0(x_k)\|^2 ]  &\le &  (1+b)L_1\sigma_r\sqrt{(1+3\sigma_r^2)n(n+6)^3}| f(x)|+3L_0^2\sigma_r^2(n+4)^2 \equiv \ell(x).
\end{eqnarray*}
Hence, 
\begin{eqnarray*}
&& \E \left[ \| s_{\tilde\mu}\|^2\right]\\
&\le& \E_{\xi_{k'}} \left[ \E _{u_k,\xi_k,\xi_{k-1}}[ 2 \langle s_{\mu},g_0(x_k)\rangle -  \|g_0(x_k)\|^2]  \right] +\ell(x)\nonumber\\
&\overset{(\ref{eq.s_mu.exp} )}{=}& \E_{\xi_{k'}} \left[ \E _{u_k}[ 2 \langle g_{\mu_k}(x_k),g_0(x_k)\rangle -  \|g_0(x_k)\|^2]    \right] + \ell(x)  \nonumber\\
&\le & \E_{\xi_{k'}} \left[\E_{u_k}[ \| g_{\mu_k}(x_k) \|^2] \right] + \ell(x)\nonumber\\
&\overset{(\ref{eqn.exp.of.g_mu.square} )}{\le}&2(n+4)\| \nabla f(x_k) \|^2 +\E_{\xi_{k'}} \left[\frac{\mu_k^2}{2}L_1^2(n+6^3)\right] + \ell(x)\\
&=& 2(n+4)\| \nabla f(x_k) \|^2 +C_5|f(x_k)|+C_6, \nonumber 
\end{eqnarray*}
where the last equality holds since $\E_{\xi_{k'}} [\mu_k^2]=\E_{\xi_{k'}} [C_4^2|f(x_k)| (1+\nu(x_k;\xi_{k'}) ]=C_4^2|f(x_k)| .$
\end{proof}

\begin{lemma}\label{lemma.s_mu.bound}
Let Assumptions~\ref{assumption_f_mul}~and~\ref{assumption_bounded_noise} hold. 
If $\mu_k=\tilde{\mu}=C_4\sqrt{|\tilde{f}(x_k;\xi_{k'}) |}$, then
\begin{eqnarray*}
\E[\langle s_{\tilde{\mu}},x_k-x^*   \rangle] \ge f(x_k)-f^*-\frac{C_4^2L_1n}{2}|f(x_k)|.
\end{eqnarray*}
\end{lemma}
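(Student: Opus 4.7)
The plan is to apply the tower property of expectation, conditioning first on $\xi_{k'}$ so that $\mu_k=\tilde\mu$ is a deterministic quantity, and then exploiting convexity of the Gaussian-smoothed objective $f_{\mu_k}$.

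First, I would condition on $\xi_{k'}$ and take the inner expectation over $\xi_k$ and $\xi_{k-1}$. Applying (\ref{eq.s_mu.exp}) gives $\E_{\xi_k,\xi_{k-1}}[s_{\tilde\mu}]=g_{\mu_k}(x_k)$, and then by (\ref{eqn.exp.of.g_mu}) a further expectation in $u_k$ yields
\[
\E_{u_k,\xi_k,\xi_{k-1}}[s_{\tilde\mu}]=\nabla f_{\mu_k}(x_k).
\]
Therefore, still conditional on $\xi_{k'}$,
\[
\E_{u_k,\xi_k,\xi_{k-1}}[\langle s_{\tilde\mu},x_k-x^*\rangle]=\langle \nabla f_{\mu_k}(x_k),x_k-x^*\rangle.
\]

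Next, since $f$ is convex by Assumption~\ref{assumption_f_mul}, so is $f_{\mu_k}$ (being the expectation of convex shifts of $f$). Convexity of $f_{\mu_k}$ together with (\ref{eq:convexity}) gives
\[
\langle \nabla f_{\mu_k}(x_k),x_k-x^*\rangle\ge f_{\mu_k}(x_k)-f_{\mu_k}(x^*).
\]
Applying (\ref{eq.f_mu.1}) to the first term and (\ref{eq.f_mu.2}) to the second,
\[
f_{\mu_k}(x_k)-f_{\mu_k}(x^*)\ge f(x_k)-f^* -\tfrac{\mu_k^2}{2}L_1 n.
\]

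Finally, I would take the outer expectation over $\xi_{k'}$. The only term that depends on $\xi_{k'}$ is $\mu_k^2=C_4^2|\tilde f(x_k;\xi_{k'})|=C_4^2|f(x_k)||1+\nu(x_k;\xi_{k'})|$. By Assumption~\ref{assumption_bounded_noise}(3), $|\nu|\le a<1$, so $1+\nu>0$ and the absolute value can be dropped, giving $\E_{\xi_{k'}}[\mu_k^2]=C_4^2|f(x_k)|\,\E_{\xi_{k'}}[1+\nu]=C_4^2|f(x_k)|$ after using $\E[\nu]=0$ from Assumption~\ref{assumption_bounded_noise}(1). Combining this with the previous inequality yields
\[
\E[\langle s_{\tilde\mu},x_k-x^*\rangle]\ge f(x_k)-f^*-\tfrac{C_4^2 L_1 n}{2}|f(x_k)|,
\]
which is the claim. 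The only real subtlety is handling the absolute value inside $\tilde f$ when computing $\E_{\xi_{k'}}[\mu_k^2]$, and the boundedness assumption $|\nu|<1$ is precisely what resolves it cleanly.
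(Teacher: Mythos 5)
Your proof is correct and follows essentially the same route as the paper: condition on $\xi_{k'}$, show $\E_{u_k,\xi_k,\xi_{k-1}}[s_{\tilde\mu}]=\nabla f_{\mu_k}(x_k)$, chain (\ref{eq:convexity}), (\ref{eq.f_mu.1}), and (\ref{eq.f_mu.2}), then take the outer expectation using $\E_{\xi_{k'}}[\mu_k^2]=C_4^2|f(x_k)|$. Your treatment is in fact slightly more careful than the paper's in two places: you justify dropping the absolute value in $|1+\nu|$ via the support bound $|\nu|\le a<1$, and you correctly carry $\tfrac{\mu_k^2}{2}L_1 n$ where the paper's displayed chain has a typographical $\tfrac{\mu_k}{2}L_1 n$.
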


\begin{proof} First, we have
\begin{eqnarray*}
\E_{u_k,\xi_k,\xi_{k-1}}[s_{\tilde{\mu}}]&=&\E_{u_k,\xi_k,\xi_{k-1}} 
\left[\frac{\tilde{f}(x_k+\mu_ku_k;\xi_k)-\tilde{f}(x_k;\xi_{k-1})}{\mu_k}u_k 
\right]\\
&=&\E_{u_k,\xi_k,\xi_{k-1}} 
\left[\frac{f(x_k+\mu_ku_k)[1+\nu(x_k+\mu_ku_k;\xi_k)]-f(x_k)[1+\nu(x_k;\xi_{k-1
})]}{\mu_k}u_k \right]\\
&=&\E_{u_k}\left[ \frac{f(x_k+\mu_ku_k)-f(x_k)}{\mu_k}u_k \right]\\
&=&\E_{u_k}[g_{{\mu}_k}(x_k)]\\
&\overset{(\ref{eqn.exp.of.g_mu} )}{=}&\nabla f_{\mu_k}(x_k).
\end{eqnarray*}
Then, we get
\begin{eqnarray*}
\E_{u_k,\xi_k,\xi_{k-1}}[\langle s_{\tilde{\mu}},x_k-x^* \rangle]&=& \langle 
\nabla f_{\mu_k}(x_k),x_k-x^*   \rangle\\
&\overset{(\ref{eq:convexity} )}{\ge}&f_{\mu_k}(x_k)-f_{\mu_k}(x^*)  \\
&\overset{(\ref{eq.f_mu.1} )}{\ge}&f(x_k)-f_{\mu_k}(x^*)\\
&\overset{(\ref{eq.f_mu.2} )}{\ge}& f(x_k)-f^*-\frac{\mu_k}{2}L_1n.
\end{eqnarray*}
Since $\mu_k=\tilde{\mu}=C_4\sqrt{|\tilde{f}(x_k;\xi_{k'}) |}$, we have
$$\E[\langle s_{\tilde{\mu}},x_k-x^*  \rangle] = \E_{\xi_{k'}}[ 
\E_{u_k,\xi_k,\xi_{k-1}}[\langle s_{\tilde{\mu}},x_k-x^*   \rangle]]  \ge 
f(x_k)-f^*-\frac{C_4^2L_1n}{2}|f(x_k)|.$$
\end{proof}

We are now ready to show the convergence of Algorithm~\ref{alg:STARS}, with 
$\mu_k=\tilde{\mu}$, for the minimization of a function  
(\ref{eq:funcmul_bounded}) with bounded multiplicative noise.

\begin{theorem}\label{algthmmul}
Let Assumptions~\ref{assumption_f_mul}~and~\ref{assumption_bounded_noise} hold. 
Let the sequence 
$\{x_k\}_{k\ge 0}$ be generated by Algorithm~\ref{alg:STARS} with the smoothing 
parameter $\mu_k$ being
$$ \mu_k  = \tilde\mu = C_4\sqrt{|\tilde{f}(x;\xi_{k'}) |}$$
and the fixed step length set to $h_k = h=\frac{1}{4L_1(n+4)}$ for all $k$. Let 
$M$ be an upper bound on the average of the historical absolute values of 
noise-free function evaluations; that is,
$$M \ge \dfrac{1}{N+1} \sum_{k=0}^N | \phi_k  | = \dfrac{1}{N+1} \left( | f(x_0) 
| +\sum_{k=1}^N \E_{\mathcal{U}_{k-1},\mathcal{P}_{k-1}} \left[  | f(x_k)   |  
\right] \right).$$
Then, for any $N\ge 0$ we have
\begin{eqnarray}
\dfrac{1}{N+1}\sum_{k=0}^N (\phi_k-f^*)\le \dfrac{4L_1(n+4)}{N+1}\| x_0-x^* \|^2+
4L_1(n+4)\left( C_7M+C_8  \right),
\label{eq:resultmul}
\end{eqnarray}
where $C_7 = \frac{C_4^2n}{4(n+4)}+\frac{C_5}{16L_1^2(n+4)^2}$ and
$C_8=\frac{C_6}{16L_1^2(n+4)^2}$.
\end{theorem}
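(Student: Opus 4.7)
The plan is to mirror the additive-noise argument (Theorem~\ref{convergence_add}) but with the multiplicative-noise versions of the key moment bounds, namely Lemmas~\ref{lemma.s_mu.square.bound}~and~\ref{lemma.s_mu.bound}. As before, set $r_k=\|x_k-x^*\|$ and start from the identity
\[
r_{k+1}^2 = r_k^2 - 2h_k\langle s_{\tilde\mu}, x_k-x^*\rangle + h_k^2\|s_{\tilde\mu}\|^2.
\]
Taking the expectation over $u_k,\xi_k,\xi_{k-1},\xi_{k'}$ conditional on $x_k$, I would apply Lemma~\ref{lemma.s_mu.bound} to bound the inner-product term from below by $f(x_k)-f^* -\tfrac{C_4^2 L_1 n}{2}|f(x_k)|$ and Lemma~\ref{lemma.s_mu.square.bound} to bound the squared-norm term above by $2(n+4)\|\nabla f(x_k)\|^2 + C_5|f(x_k)| + C_6$. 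Then I would use the convexity/Lipschitz inequality (\ref{eq:L1third}) to replace $\|\nabla f(x_k)\|^2$ by $2L_1(f(x_k)-f^*)$, arriving at
\[
\E[r_{k+1}^2] \le r_k^2 - 2h_k\bigl(1 - 2h_k L_1(n+4)\bigr)(f(x_k)-f^*) + \bigl(h_k C_4^2 L_1 n + h_k^2 C_5\bigr)|f(x_k)| + h_k^2 C_6 .
\]

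The next step is to plug in $h_k = h = 1/(4L_1(n+4))$, which makes the coefficient $2h_k(1-2h_k L_1(n+4))$ equal to $h/2 = 1/(8L_1(n+4))$, and which collapses the $|f(x_k)|$ coefficient to $4L_1(n+4)\,C_7$ and the constant term to $4L_1(n+4)\,C_8$ after multiplication by $8L_1(n+4)$. More precisely I would rearrange to
\[
\tfrac{1}{8L_1(n+4)}\bigl(f(x_k)-f^*\bigr) \le r_k^2 - \E[r_{k+1}^2] + \tfrac{C_7}{2L_1(n+4)}|f(x_k)| + \tfrac{C_8}{2L_1(n+4)}.
\]
I would then take the full expectation over $\mathcal{U}_{k-1},\mathcal{P}_{k-1}$ and $\xi_{k'}$, so that $f(x_k)-f^*$ becomes $\phi_k-f^*$ and $|f(x_k)|$ becomes $\E[|f(x_k)|]$, sum over $k=0,\dots,N$, telescope the $r_k^2$ terms (dropping $\E[r_{N+1}^2]\ge 0$), and divide by $N+1$. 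Multiplying through by $8L_1(n+4)$ and using the hypothesis $\tfrac{1}{N+1}\sum_{k=0}^N \E[|f(x_k)|] \le M$ then yields exactly (\ref{eq:resultmul}).

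The main obstacle, compared with the additive case, is the presence of the $|f(x_k)|$ terms inside the per-step recursion: these cannot be controlled pointwise and do not telescope, so one cannot close the recursion into a statement about $\phi_k-f^*$ alone. The key idea is precisely the introduction of the running average bound $M$ in the hypothesis; once this is in place, summation in $k$ converts $\sum |f(x_k)|$ into $(N+1)M$ and the $|f(x_k)|$-contribution appears as the additive $4L_1(n+4)C_7 M$ term in the final bound. A secondary bookkeeping point is that $\mu_k$ depends on the extra random variable $\xi_{k'}$; this is handled exactly as in Lemmas~\ref{lemma.s_mu.square.bound}~and~\ref{lemma.s_mu.bound}, by iterated expectation with $\xi_{k'}$ taken last and using $\E_{\xi_{k'}}[\mu_k^2] = C_4^2|f(x_k)|$, so no additional structural difficulty arises at the convergence-theorem level.
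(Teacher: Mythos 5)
Your proof follows the paper's argument exactly: the same expansion of $r_{k+1}^2$, the same application of Lemmas~\ref{lemma.s_mu.bound} and~\ref{lemma.s_mu.square.bound} together with (\ref{eq:L1third}), and the same summation/telescoping against the hypothesis on $M$; the structure is sound and no idea is missing. However, your constant tracking after substituting $h=\frac{1}{4L_1(n+4)}$ is off. Since $2hL_1(n+4)=\frac{1}{2}$, the coefficient $2h\bigl(1-2hL_1(n+4)\bigr)$ equals $h=\frac{1}{4L_1(n+4)}$, not $h/2$; and the coefficients of $|f(x_k)|$ and of the constant term are exactly $hC_4^2L_1n+h^2C_5=C_7$ and $h^2C_6=C_8$ (this is precisely how $C_7$ and $C_8$ are defined), not $C_7/(2L_1(n+4))$ and $C_8/(2L_1(n+4))$. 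Consequently your displayed intermediate inequality, multiplied by $8L_1(n+4)$ as you propose, would produce $\frac{8L_1(n+4)}{N+1}\|x_0-x^*\|^2+4C_7M+4C_8$ rather than (\ref{eq:resultmul}). The correct per-step recursion is
\begin{equation*}
\E\left[r_{k+1}^2\right]\le r_k^2-\frac{f(x_k)-f^*}{4L_1(n+4)}+C_7|f(x_k)|+C_8,
\end{equation*}
which, upon taking full expectations, summing over $k=0,\dots,N$, dividing by $N+1$, and multiplying by $4L_1(n+4)$, yields the stated bound. These are bookkeeping slips rather than gaps in the argument, but as written your chain of inequalities does not land on the theorem's constants.
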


\begin{proof}

Let $r_k = \| x_k-x^* \|$. First, 
\begin{eqnarray*}
r_{k+1}^2 & = &\| x_k - h_k s_{\tilde\mu} -x^* \|^2\nonumber\\
& =& r_k^2-2h_k\langle s_{\tilde\mu}, x_k-x^*\rangle + h_k^2\|s_{\tilde\mu}\|^2.
\end{eqnarray*}
$\E[\langle s_{\tilde\mu}, x_k-x^*\rangle]$ and $\E[\| s_{\tilde\mu} \|^2]$ are 
derived in Lemma~\ref{lemma.s_mu.bound} and Lemma \ref{lemma.s_mu.square.bound}, 
respectively. Hence, incorporating (\ref{eq:L1third}), we derive
\begin{eqnarray*}
\E\left[ r_{k+1}^2 \right]&\le &  r_k^2 -2h_k  ( 
f(x_k)-f^*-\frac{C_4^2L_1n}{2}|f(x_k)| ) +h_k^2[   2(n+4)\| \nabla f(x_k) \|^2 
+C_5|f(x_k)|+C_6   ] \nonumber\\
&\le&r_k^2-2h_k(1-2h_kL_1(n+4))(f(x_k)-f^*) +(h_kC_4^2L_1n+h_k^2C_5)|f(x_k)|+h_k^2C_6.
\end{eqnarray*}

Let $h_k=\frac{1}{4L_1(n+4)}$. Then, taking the expectation with respect to 
$\mathcal{U}_{k}=\{ u_1,\cdots, u_k \}$ and 
$\mathcal{P}_k=\{\xi_0,\xi_0',\xi_1,\xi_{1'},\cdots,\xi_{k}\}$ yields
\begin{eqnarray}
\E_{\mathcal{U}_{k},\mathcal{P}_{k}}\left[ r_{k+1}^2\right]&\le&  
\E_{\mathcal{U}_{k-1},\mathcal{P}_{k-1}}\left[r_{k}^2\right] -\dfrac{ \phi_k-f^* 
}{ 4L_1(n+4) } + C_7 | \phi_k|+C_8.\nonumber
\end{eqnarray}

Summing these inequalities over $k=0,\cdots,N$ and dividing by $N+1$, we get 
\begin{eqnarray}
\dfrac{1}{N+1}\sum_{k=0}^N (\phi_k-f^*)&\le& \dfrac{4L_1(n+4)}{N+1}\| x_0-x^* \|^2+
4L_1(n+4)(C_7M+C_8).\nonumber
\end{eqnarray}
\end{proof}

The bound (\ref{eq:resultmul}) is valid also for $\hat{\phi}_N = 
\E_{\mathcal{U}_{k-1},\mathcal{P}_{k-1}} \left[f(\hat{x}_N)\right] $, where  
$\hat{x}_N =\arg \min_x  \{ f(x) : x\in \{ x_0,\cdots,x_N \} \} $. In this case,
\begin{eqnarray}\label{eq.rel.final.result}
\E_{\mathcal{U}_{k-1},\mathcal{P}_{k-1}} \left[f(\hat{x}_N)\right] -f^* &\le & 
\E_{\mathcal{U}_{k-1},\mathcal{P}_{k-1}}   \left[  \dfrac{1}{N+1}\sum_{k=0}^N 
(\phi_k-f^*)  \right] \nonumber\\
&\le &\dfrac{4L_1(n+4)}{N+1}\| x_0-x^* \|^2+
4L_1(n+4)(C_7M+C_8).
\end{eqnarray}

Let us collect and simplify the constants $C_7$ and $C_8$. First,
$C_8=\frac{C_6}{16L_1^2(n+4)^2}=\frac{3L_0^2\sigma_r^2}{16L_1^2}$. Second,
since  
\begin{eqnarray*}
C_5 & = &\frac{1}{2}C_4^2L_1^2(n+6)^3+ (1+b)L_1\sigma_r\sqrt{(1+3\sigma_r^2)n(n+6)^3}\\
&=& 2L_1\sigma_r\sqrt{\frac{1}{1+3\sigma_r^2}}\sqrt{n(n+6)^3}+ (1+b)L_1\sigma_r\sqrt{(1+3\sigma_r^2)n(n+6)^3}\\
&\le& (b+3)L_1\sigma_r\sqrt{1+3\sigma_r^2}\sqrt{n(n+6)^3},
\end{eqnarray*}
where the last inequality holds because $\frac{1}{1+3\sigma_r^2}\le1\le
1+3\sigma_r^2$, we can derive
\begin{eqnarray*}
C_7 &=& \frac{C_4^2n}{4(n+4)}+\frac{C_5}{16L_1^2(n+4)^2}\\
&\le& \frac{1}{L_1}\sqrt{\frac{\sigma_r^2}{1+3\sigma_r^2}}\cdot \frac{n}{n+4}\sqrt{\frac{n}{(n+6)^3}}
+ \frac{(b+3)\sigma_r\sqrt{1+3\sigma_r^2}}{16L_1} \cdot \frac{\sqrt{n(n+6)^3}}{(n+4)^2}\\
&\le & \frac{\sigma_r\sqrt{1+3\sigma_r^2}}{L_1}\left[ g_2(n)  +  (b+3) g_3(n)
\right],
\end{eqnarray*}
where $g_2(n)=\frac{n}{n+4}\sqrt{\frac{n}{(n+6)^3}}$,
$g_3=\frac{\sqrt{n(n+6)^3}}{16(n+4)^2}$, and the last inequality again utilizes
$\frac{1}{1+3\sigma_r^2}\le1\le 1+3\sigma_r^2$. It can be shown that $g_2'(n)<0$
for all $n\ge 8$ and $g_2'(n)>0$ for all $n\le 7$, thus $g_2(n)\le
\max\{g(7),g(8)\}=\max\{ 0.0359,0.0360 \}\le \frac{3}{64}$. Similarly, one can
prove that $g_3'(12)=0$, $g_3'(n)<0$ for all $n> 12$, and $g_3'(n)>0$ for all
$n< 12$, which indicates $g_3(n)\le g_3(12)\approx 0.0646\le \frac{3}{32}$.
Hence,
\begin{eqnarray*}
C_7 &\le& \frac{3(2b+7)\sigma_r\sqrt{1+3\sigma_r^2}}{64L_1} \le
\frac{3\sqrt{3}(2b+7)(\sigma_r^2+\frac{1}{6})}{64L_1},
\end{eqnarray*}
where the last inequality holds because
$\sigma_r\sqrt{\frac{1}{3}+\sigma_r^2}\le\sigma_r^2+\frac{1}{6} $.

With $C_7$ and $C_8$ simplified, (\ref{eq.rel.final.result}) can be used to
establish an accuracy $\epsilon$ for $\hat{\phi}_N$; that is,
$\hat{\phi}_N-f^*\le 
\epsilon$, can be achieved in $\mathcal{O}\left(  \dfrac{n}{\epsilon} L_1 
R^2\right)$ iterations, provided the variance of the relative noise
$\sigma_r^2$ 
satisfies
\begin{eqnarray*}\label{eq:predepsrel}  
4L_1(n+4)(C_7M+C_8)\le
\frac{1}{2}C_9(\sigma_r^2+\frac{1}{6})(n+4)\le\frac{\epsilon}{2},    
\end{eqnarray*} 
where $ C_9 = \frac{3\sqrt{3}}{8} (2b+7)M+\frac{3L_0^2}{2L_1}$, that is,
\begin{eqnarray}\label{eq:predepsrel}  
\sigma_r^2 \le\frac{\epsilon}{C_9 (n+4)}-\frac{1}{6}.
\end{eqnarray} 

The bound in (\ref{eq:predepsrel}) may be cause for concern since the upper
bound may only be positive for larger values of $\epsilon$. Rearranging the
terms explicitly shows that the additive term $\frac{1}{6}$ is a limiting factor
for the best accuracy that can be ensured by this bound: 
\begin{eqnarray}
\epsilon_{\mbox{pred}}\ge C_9(\sigma_r^2+\frac{1}{6}) (n+4).
\end{eqnarray}

\section{Numerical Experiments}
\label{sec:experiments}

We perform three types of numerical studies. Since our convergence rate
analysis guarantees only that the means converge, we first test how much
variability the performance of {\sf  STARS} show from one run to another.
Second, we study the convergence behavior of {\sf STARS} in both the absolute 
noise and multiplicative noise cases and examine these results relative to the
bounds established in our analysis. Then, we compare {\sf STARS} with four 
other randomized zero-order methods to highlight what is gained by using an
adaptive smoothing stepsize. 

\subsection{Performance Variability}\label{sec.noise.invariant}
We first examine the variability of the performance of {\sf STARS} relative to 
that of Nesterov's {\sf RG} 
algorithm \cite{Nest2011}, which is summarized in Algorithm \ref{alg:RG}. One
can observe 
that {\sf RG} and {\sf STARS} have identical algorithmic updates except for the 
choice of the smoothing stepsize $\mu_k$. Whereas {\sf STARS} takes into
account 
the noise level, {\sf RG} calculates the smoothing stepsize based on the target 
accuracy $\epsilon$ in addition to the problem dimension and Lipschitz
constant,
 \begin{equation}
 \label{eqn.rg.smoothing.stepsize}
 \mu  = \frac{5}{3(n+4)} \sqrt{\frac{\epsilon}{2L_1}}.
 \end{equation}

%

\balgorithm[t]
\caption{({\sf RG}: Random Search for Smooth Optimization)}
\label{alg:RG}
 \balgorithmic[1]
\STATE Choose initial point $x_0$ and iteration limit $N$. Fix step length 
$h_k = h = \frac{1}{4(n+4)L_1 }$ and compute smoothing stepsize $ \mu_k$ based 
on $\epsilon=2^{-16}$. Set $k \gets 1$.\\
\STATE Generate a random Gaussian vector $u_k$. \\
\STATE Evaluate the function values $\tilde{f}(x_k;\xi_k)$ and $\tilde{f}(x_k + 
\mu_ku_k;\xi_k)$.\\
\STATE Call the random stochastic gradient-free oracle \\
$$s_\mu (x_k;u_k,\xi_k) = 
\frac{\tilde{f}(x_k+\mu_ku_k;\xi_k)-\tilde{f}(x_k;\xi_{k})}{\mu_k}u_k.$$
\STATE Set  $x_{k+1}=x_k-h_k s_\mu (x_k;u_k,\xi_k)$, update $k\gets k+1$, and 
return to Step 2.
  \ealgorithmic
\ealgorithm

\begin{figure}[ht!]
     \begin{center}
        \subfigure[$\sigma_a = 10^{-6}$]{%
            \label{fig:shade.abs.6}
            \includegraphics[width=0.5\textwidth]{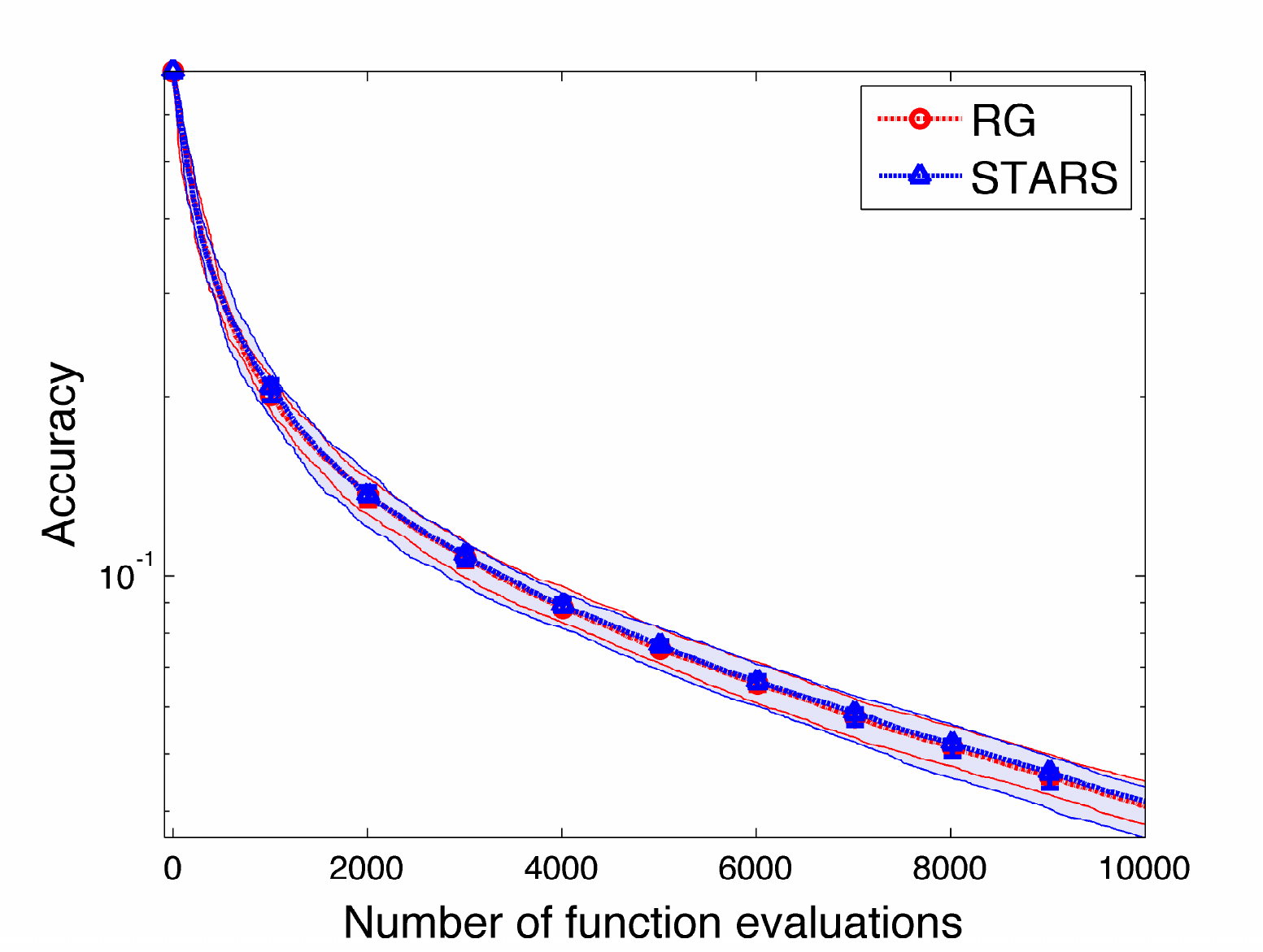}
        }%
        \subfigure[$\sigma_a = 10^{-3}$]{%
           \label{fig:shade.abs.4}
           \includegraphics[width=0.5\textwidth]{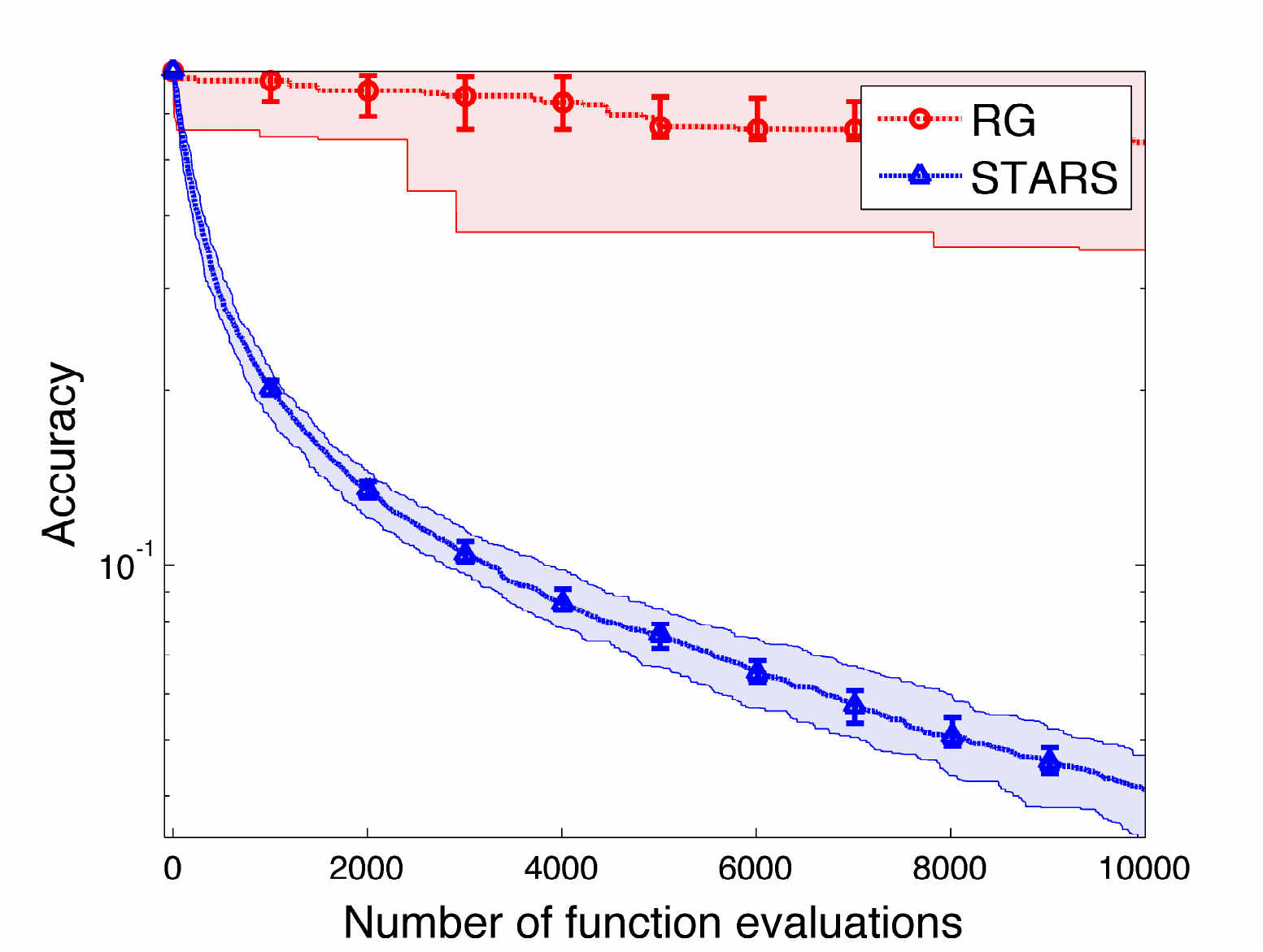}
        }\\
                \subfigure[$\sigma_r = 10^{-6}$]{%
            \label{fig:shade.rel.6}
            \includegraphics[width=0.5\textwidth]{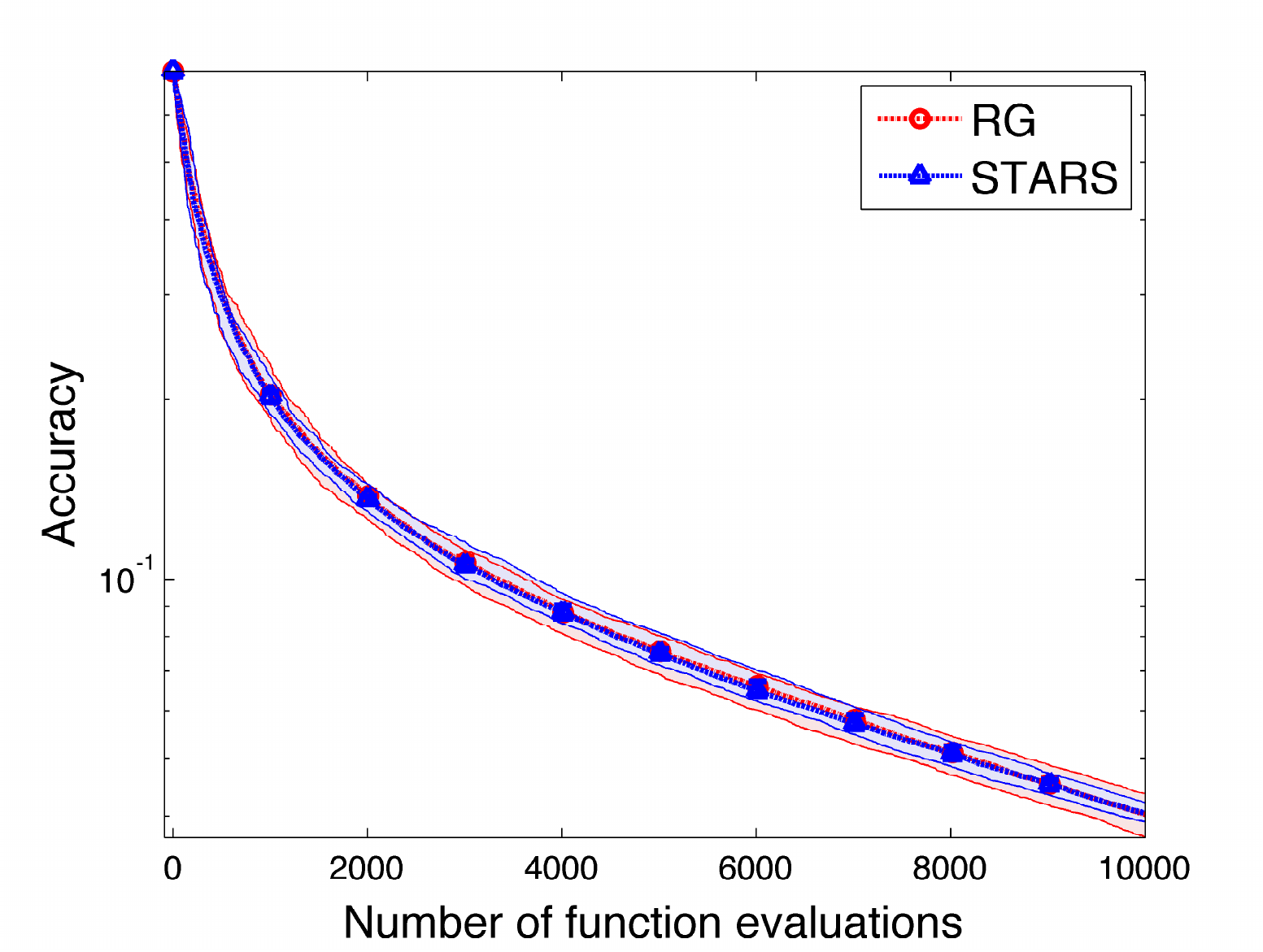}
        }%
        \subfigure[$\sigma_r = 10^{-3}$]{%
           \label{fig:shade.rel.3}
           \includegraphics[width=0.5\textwidth]{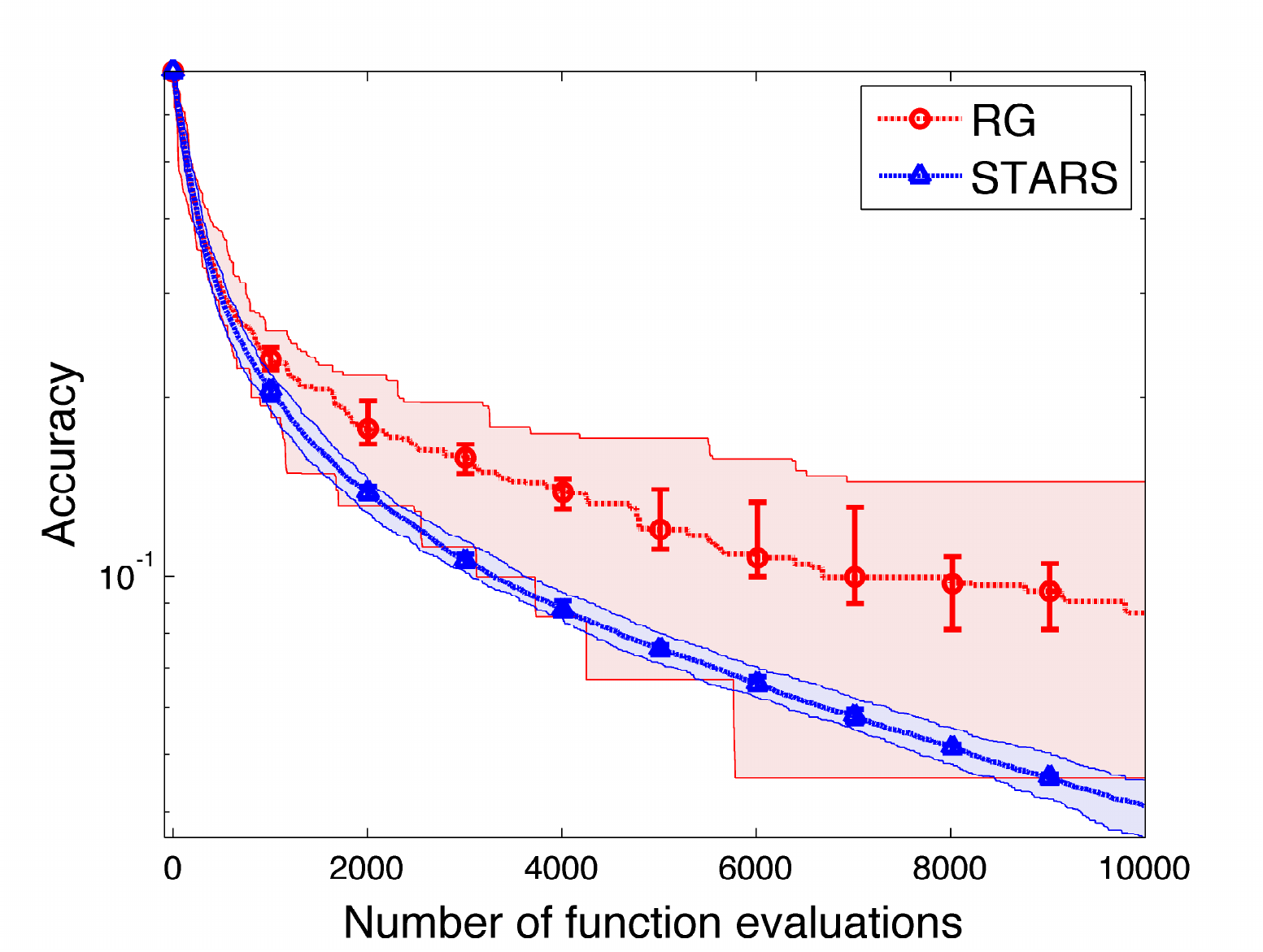}
        }
            \end{center}
    \caption{Median and quartile plots of achieved accuracy with respect to 20 
random seeds when applying {\sf RG} and {\sf STARS} to the noisy $f_1$ 
function. Figures \ref{sec:experiments}.\ref{fig:shade.abs.6} and 
\ref{sec:experiments}.\ref{fig:shade.abs.4} show the additive noise case, while 
Figures \ref{sec:experiments}.\ref{fig:shade.rel.6} and 
\ref{sec:experiments}.\ref{fig:shade.rel.3} show the multiplicative noise case.}
   \label{fig:shade}
\end{figure}

$\mathsf{MATLAB}$ implementations of both {\sf RG} and {\sf STARS} are tested on 
a smooth convex function with random noise added in both additive and 
multiplicative forms. 
In our tests, we use uniform random noise, with $\nu$ generated uniformly from
the interval $[-\sqrt{3}\sigma,\sqrt{3}\sigma]$ by using $\mathsf{MATLAB}$'s
random number generator $\mathsf{rand}$. This choice ensures that $\nu$ has zero
mean and bounded variance $\sigma^2$ in both the 
additive ($\sigma_a = \sigma$) and multiplicative cases ($\sigma_r = \sigma$) 
and that Assumptions~\ref{assumption_add}~and~\ref{assumption_bounded_noise}
hold, provided that $\sigma<3^{-1/2}$.

We use Nesterov's smooth 
function as introduced in \cite{Nest2011}: 
\begin{equation}\label{eqn.smooth.function}
f_1(x) = \dfrac{1}{2}(x^{(1)} )^2 + \dfrac{1}{2} \sum_{i=1}^{n-1}  (x^{(i+1)} 
-x^{i} )^2  +\dfrac{1}{2}(x^{(n)} )^2 -x^{(1)},
\end{equation}
where $x^{(i)}$ denotes the $i$th component of the vector $x\in \R^n$. The 
starting point specified for this problem is the vector of zeros,  $x_0 = 
\textbf{0}$. 
The optimal solution is $$ x^{*(i)} = 1-\dfrac{i}{n+1},\, i = 1,\cdots, n;\quad 
f(x^*) =- \dfrac{n}{2(n+1)} .$$

The analytical values for the parameters (corresponding to Lipschitz constant 
for the gradient and the squared Euclidean 
distance between the starting point and optimal solution)
are: $L_1\le 4$ and $R^2 =\| x_0-x^* \|^2 \le \dfrac{n+1}{3}.$ Both methods 
were given the same parameter value (4.0) for $L_1$, but the smoothing 
stepsizes differ. Whereas {\sf RG} always uses fixed stepsizes of the form 
(\ref{eqn.rg.smoothing.stepsize}), {\sf STARS} uses fixed stepsizes of the form 
(\ref{eqn.opt.mu}) in the absolute noise case and uses dynamic stepsizes 
calculated as (\ref{eqn.tilde.mu.bounded}) in the multiplicative noise case. To
observe convergence over many random trials, we use a small problem 
dimension of $n=8$; however, the behavior shown in Figure~\ref{fig:shade} 
is typical of the behavior that we observed in higher dimensions 
(but the $n=8$ case requiring fewer function evaluations).

In Figure \ref{fig:shade}, we plot the accuracy achieved at each function 
evaluation, which is the true function value $f(x_k)$ minus the optimal 
function value $f(x^*)$. The median across 20 trials is plotted as a line; 
the shaded region denotes the best and worst trials; and the 25\% and 75\% 
quartiles are plotted as error bars. We observe that when the function 
is relatively smooth, as in Figure~\ref{sec:experiments}.\ref{fig:shade.abs.6} 
when the additive noise is $10^{-6}$, the methods exhibit similar performance. 
As the function gets more noisy, however, as in Figure 
\ref{sec:experiments}.\ref{fig:shade.abs.4} when the additive noise becomes 
$10^{-4}$, {\sf RG} shows more fluctuations in performance resulting in large 
variance, whereas the performance {\sf STARS} is almost the same as in the 
smoother case. The same noise-invariant behavior of {\sf STARS} can be 
observed in the multiplicative case.  

\subsection{Convergence Behavior}

\begin{figure}[t!]
     \begin{center}
        \subfigure[$\sigma_a = 10^{-2}$]{%
            \label{fig:convergence.abs.2}
            \includegraphics[height=2.3in]{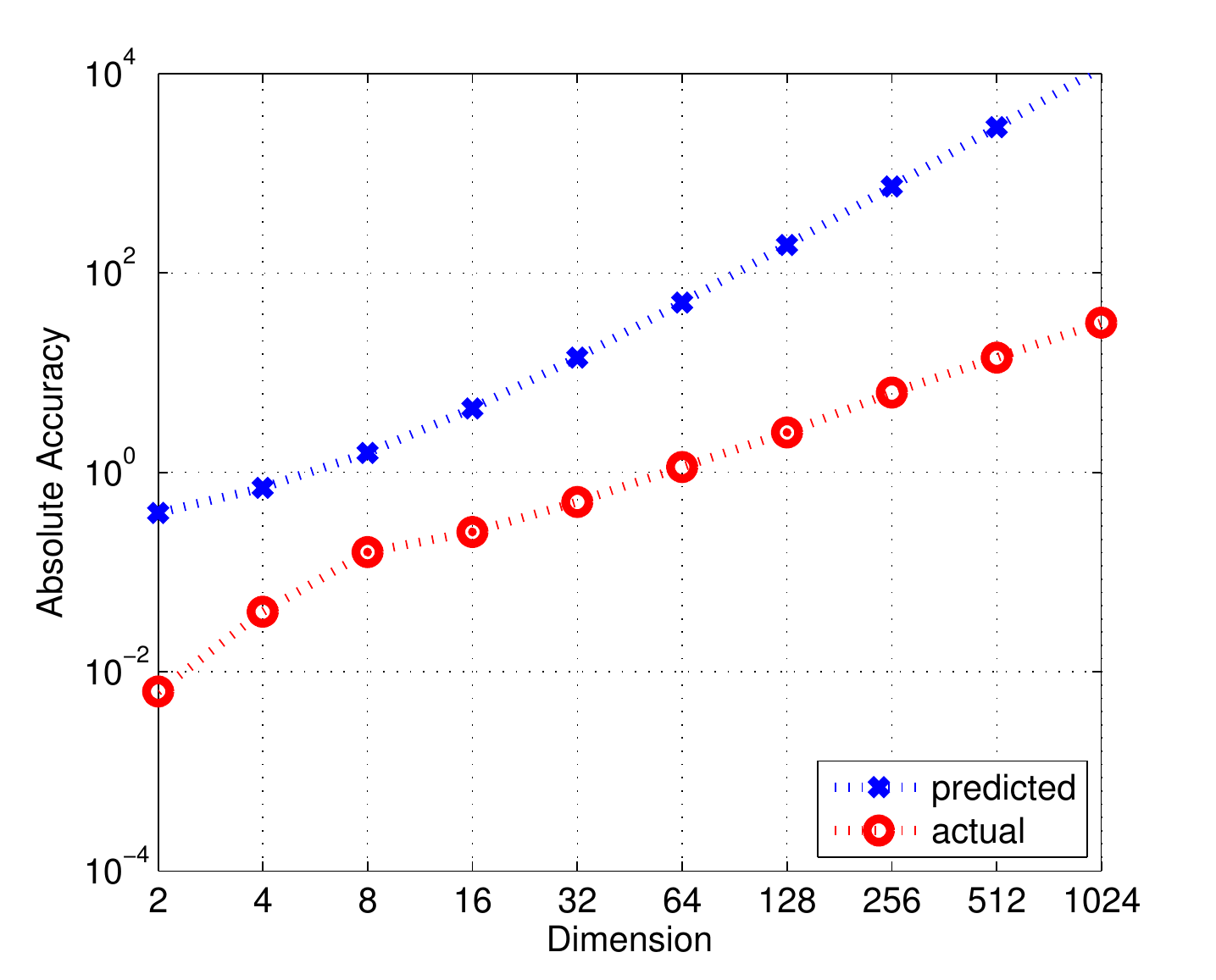}
        }%
        \subfigure[$\sigma_a = 10^{-4}$]{%
           \label{fig:convergence.abs.4}
           \includegraphics[height=2.3in]{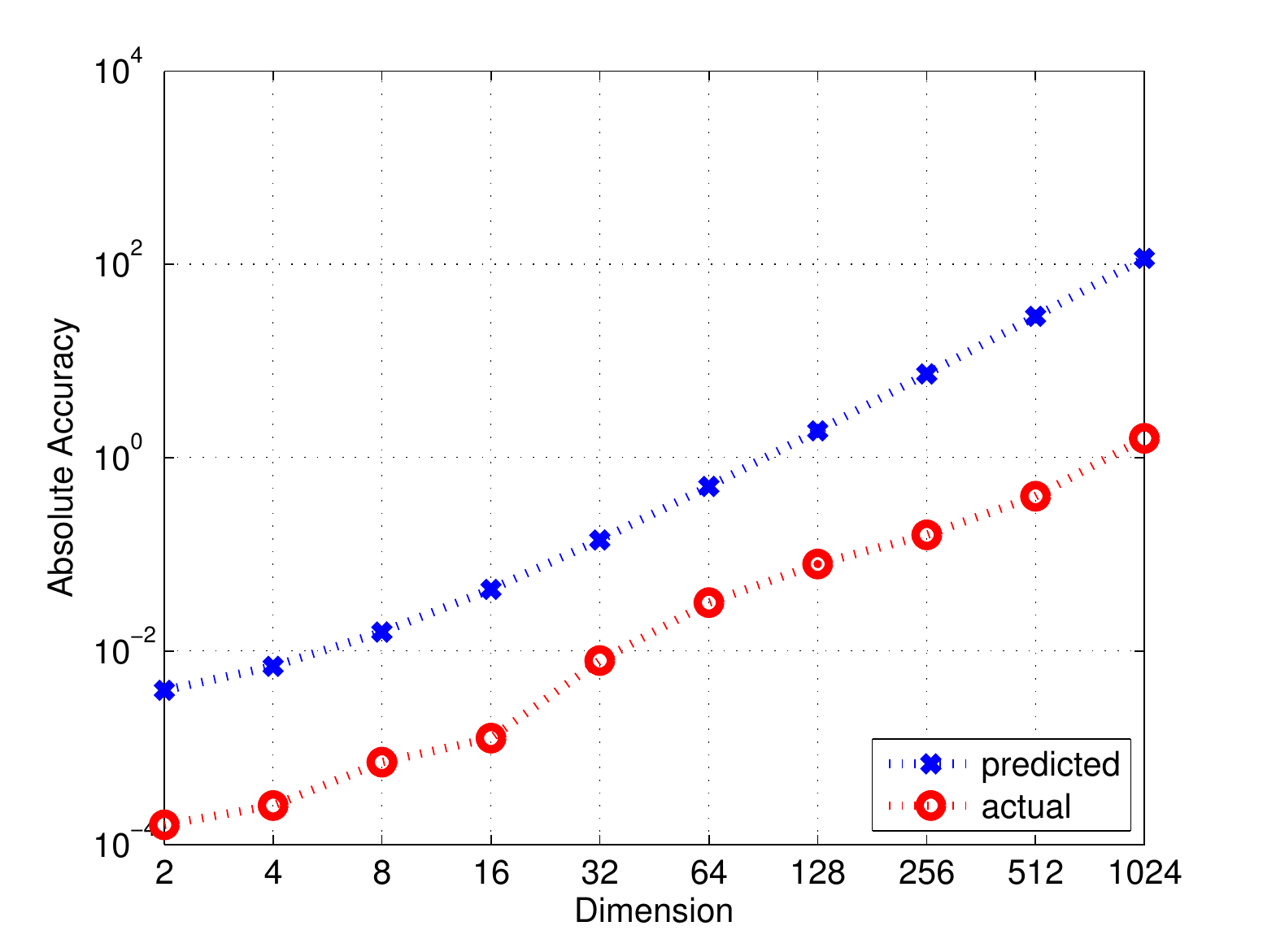}
        }\\
	\subfigure[$\sigma_r = 10^{-4}$]{%
            \label{fig:convergence.rel.4}
            \includegraphics[height=2.3in]{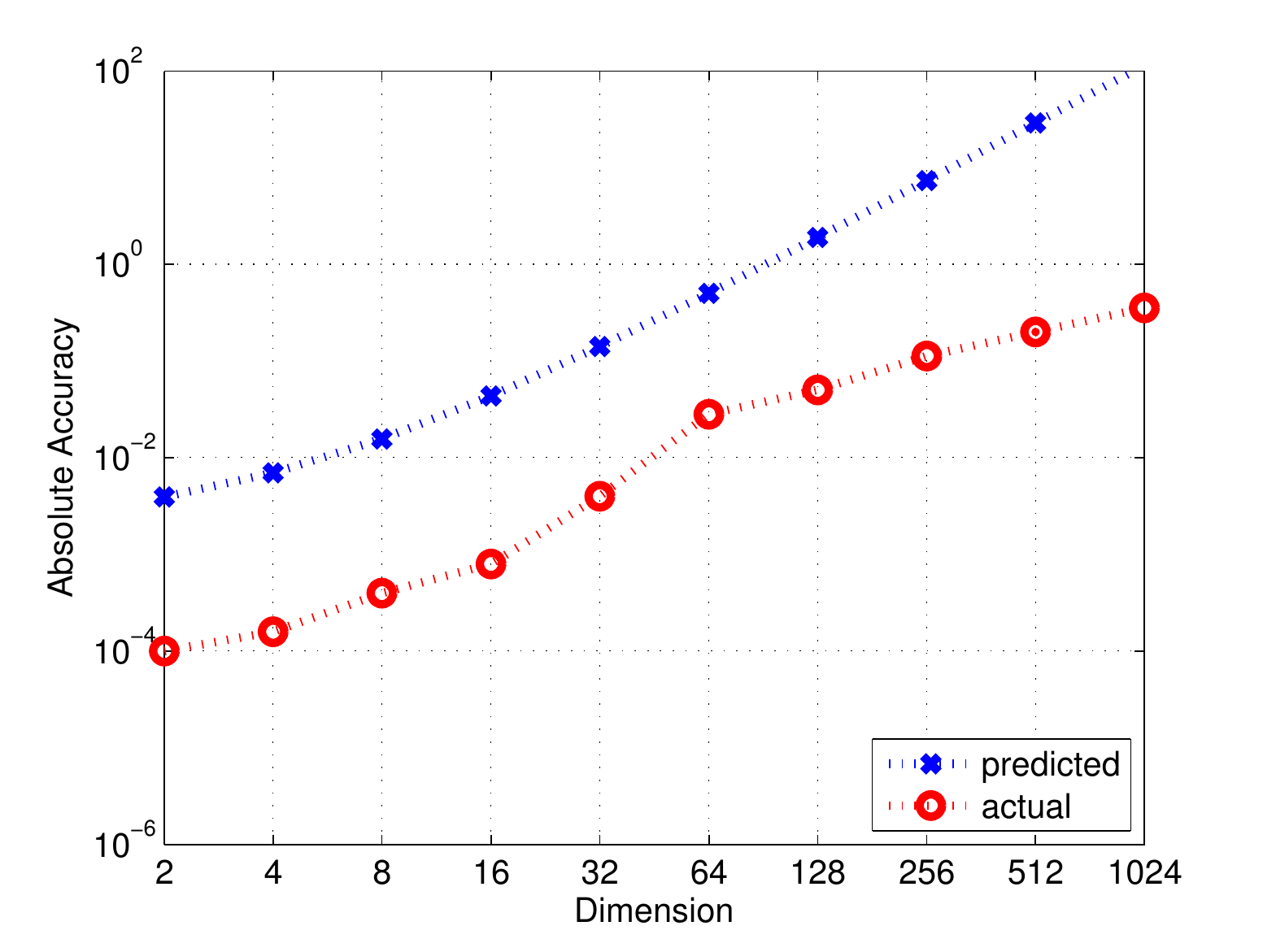}
        }%
        \subfigure[$\sigma_r = 10^{-6}$]{%
           \label{fig:convergence.rel.6}
           \includegraphics[height=2.3in]{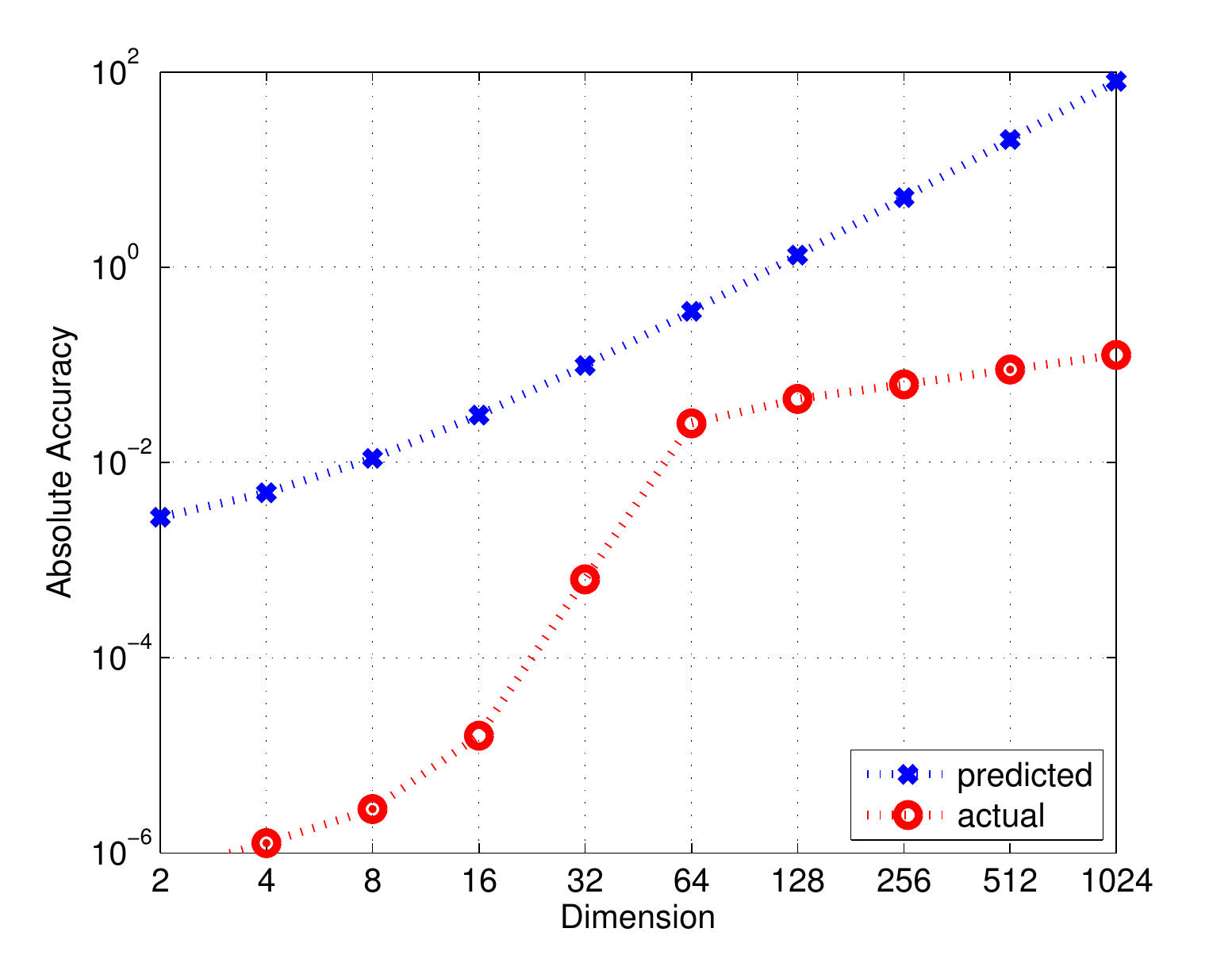}
        }
            \end{center}
    \caption{Convergence behavior of {\sf STARS}: absolute accuracy versus 
dimension $n$. Two absolute noise levels (a) and (b), and two relative noise 
levels (c) and (d) are presented.}
   \label{fig:convergence}
\end{figure}

We tested the convergence behavior of {\sf STARS} with respect to dimension $n$ 
and noise levels on the same smooth convex
function $f_1$ with noise added in the same way as in
Section~\ref{sec.noise.invariant}. The results are summarized in
Figure~\ref{fig:convergence} , 
where (a) and (b) are for the additive case and (c) and (d) are for the
multiplicative case. 
The horizontal axis marks the problem dimension and the vertical axis shows the 
absolute accuracy. Two types of absolute accuracy are plotted. First, 
$\epsilon_{\mbox{pred}}$ (in blue $\times$'s) is the best achievable accuracy
given a 
certain noise level, computed by using (\ref{eq:predepsadd}) for the additive
case 
and (\ref{eq:predepsrel}) for the multiplicative case. Second is the actual 
accuracy (in red circle) achieved by {\sf STARS} after $N$ iterations where $N$, 
calculated as in (\ref{eq:N_add}), is the number of iterations needed in theory 
to get $\epsilon_{\mbox{pred}}$. 
Because of the stochastic nature of {\sf STARS}, we perform 15 runs (each with a
different random seed) of each test and report the averaged accuracy 
\begin{equation}\label{eqn.accuracy.formula}
\bar{\epsilon}_{\mbox{actual}} = \frac{1}{15}\sum_{i=1}^{15} 
\epsilon_{\mbox{actual}}^i = \frac{1}{15}\sum_{i=1}^{15} (f(x_{N}^i)-f^*).
\end{equation}



We observe from Figure \ref{fig:convergence} that the solution obtained by 
{\sf STARS} within the iteration limit $N$ is more accurate than that predicted
by 
the theoretical bounds. The difference between predicted and achieved accuracy
is always over an order of magnitude and is 
relatively consistent for all dimensions we examined. 

\subsection{Illustrative Example}\label{sec.illustrative.example}

In this section, we provide a comparison between {\sf STARS} 
and four other zero-order algorithms on noisy versions of 
(\ref{eqn.smooth.function}) with $n=8$. The methods we study all share a 
stochastic nature; that is, a random direction is generated at each iteration. 
Except for {\sf RP} \cite{Stich2011}, which is designed for
solving 
smooth convex functions, the rest are stochastic optimization algorithms. 
However, we still include {\sf RP} in the comparison because of its similar 
algorithmic framework. The algorithms and their function-specific 
inputs are summarized in Table \ref{tab:alg.param}, where 
$\tilde{L}_1$ and $\tilde{\sigma}^2$ are, respectively, estimations of $L_1$ and $\sigma^2$ given a noisy function (details on how to estimate $\tilde{L}_1$ and $\tilde{\sigma}^2$ are discussed in Appendix). 
We now briefly introduce each of the tested algorithms; 
algorithmic and implementation details are given in the appendix.

\begin{table}[htdp] 
\caption{Relevant function parameters for different methods.}
\begin{center}
\begin{tabular}{c|c|c}
\hline
 Method Abbreviation & Method Name & Parameters  \\  \hline
{\sf STARS} & Stepsize Approximation in Random Search & $L_1,\sigma^2$  \\ 
\hline
{\sf SS} & Random Search for Stochastic Optimization \cite{Nest2011} &$L_0, R^2$  \\  \hline
{\sf RSGF} & Random Stochastic Gradient Free method \cite{Lan2012} & $\tilde{L}_1,\tilde{\sigma}^2$ \\  \hline
{\sf RP} & Random Pursuit \cite{Stich2011} &  - \\  \hline
{\sf ES}& (1+1)-Evolution Strategy \cite{Schumer1968}& - \\  \hline
\end{tabular}
\end{center}
\label{tab:alg.param}
\end{table}%

The first zero-order method we include, named {\sf SS} (Random Search for 
Stochastic Optimization), is proposed in \cite{Nest2011} for solving 
(\ref{eqn.prob.chp3}). It assumes that $f \in \mathcal{C}^{0,0}(\R^n)$ is 
convex. The {\sf SS} algorithm, summarized in Algorithm \ref{alg:SS}, shares the 
same algorithmic framework as {\sf STARS} except for the choice of smoothing 
stepsize $\mu_k$ and the step length $h_k$. It is shown that the quantities  
$\mu_k$ and $h_k$ can be chosen so that a solution for (\ref{eqn.prob.chp3}) 
such that $f(x_N)-f^*\le \epsilon $ can be ensured by {\sf SS} in 
$\mathcal{O}(n^2/\epsilon^2)$ iterations. 

Another stochastic zero-order method that also shares an algorithmic 
framework similar to {\sf STARS} is {\sf RSGF} \cite{Lan2012}, which is
summarized in
Algorithm~\ref{alg:RSGF}. {\sf RSGF} targets the stochastic optimization
objective function in 
(\ref{eqn.prob.chp3}), but the authors relax the convexity assumption and 
allow $f$ to be nonconvex. However, it is assumed that $ 
\tilde{f}(\cdot,\xi)\in  \mathcal{C}^{1,1}(\mathbb{R}^n)  $ almost surely, which 
implies that $f\in  \mathcal{C}^{1,1}(\mathbb{R}^n)$. The authors show that the 
iteration complexity for {\sf RSGF} finding an $\epsilon$-accurate solution,
(i.e., a point 
$\bar{x}$ such that $\mathbb{E}[\| \nabla f(\bar{x}) \|] \le \epsilon$) can be 
bounded by $\mathcal{O}(n/\epsilon^2)$. Since such a solution $\bar{x}$ 
satisfies $f(\bar{x})-f^*\le \epsilon $ when $f$ is convex, this bound improves 
Nesterov's result in \cite{Nest2011} by a factor $n$ for convex 
stochastic optimization problems. 

In contrast with the presented randomized approaches that work with a Gaussian
vector $u$, we include an algorithm that samples from a 
uniform distribution on the unit hypersphere. Summarized in
Algorithm~\ref{alg:RP}, {\sf RP} \cite{Stich2011} is designed for unconstrained,
smooth, convex
optimization.  
It relaxes the requirement in \cite{Nest2011} of approximating directional 
derivatives via a suitable oracle. Instead, the sampling directions are chosen
uniformly at random on the unit hypersphere, and the step lengths are determined
by a 
line search oracle. This randomized method also requires only zeroth-order 
information about the objective function, but it does not need any 
function-specific parametrization. It was shown that {\sf RP} meets the 
convergence rates of the standard steepest descent method up to a factor $n$. 

Experimental studies of variants of $ (1+1)$-Evolution Strategy ({\sf ES}), 
first proposed by Schumer and Steiglitz \cite{Schumer1968}, have shown their 
effectiveness in practice and their robustness in noisy environment. However,
provable 
convergence rates are derived only for the simplest forms of {\sf ES} on 
unimodal objective functions \cite{jagerskupper20061+,auge2005a,jah:2010a}, such 
as sphere or ellipsoidal functions. 
The implementation we study is summarized in Algorithm \ref{alg:ES}; however, 
different variants of this scheme have been studied in 
\cite{beyer2002evolution}.

\begin{figure}[t!]
     \begin{center}
        \subfigure[$\sigma_a=10^{-5}$]{%
            \label{fig:traj.abs.5}
            \includegraphics[width=0.5\textwidth]{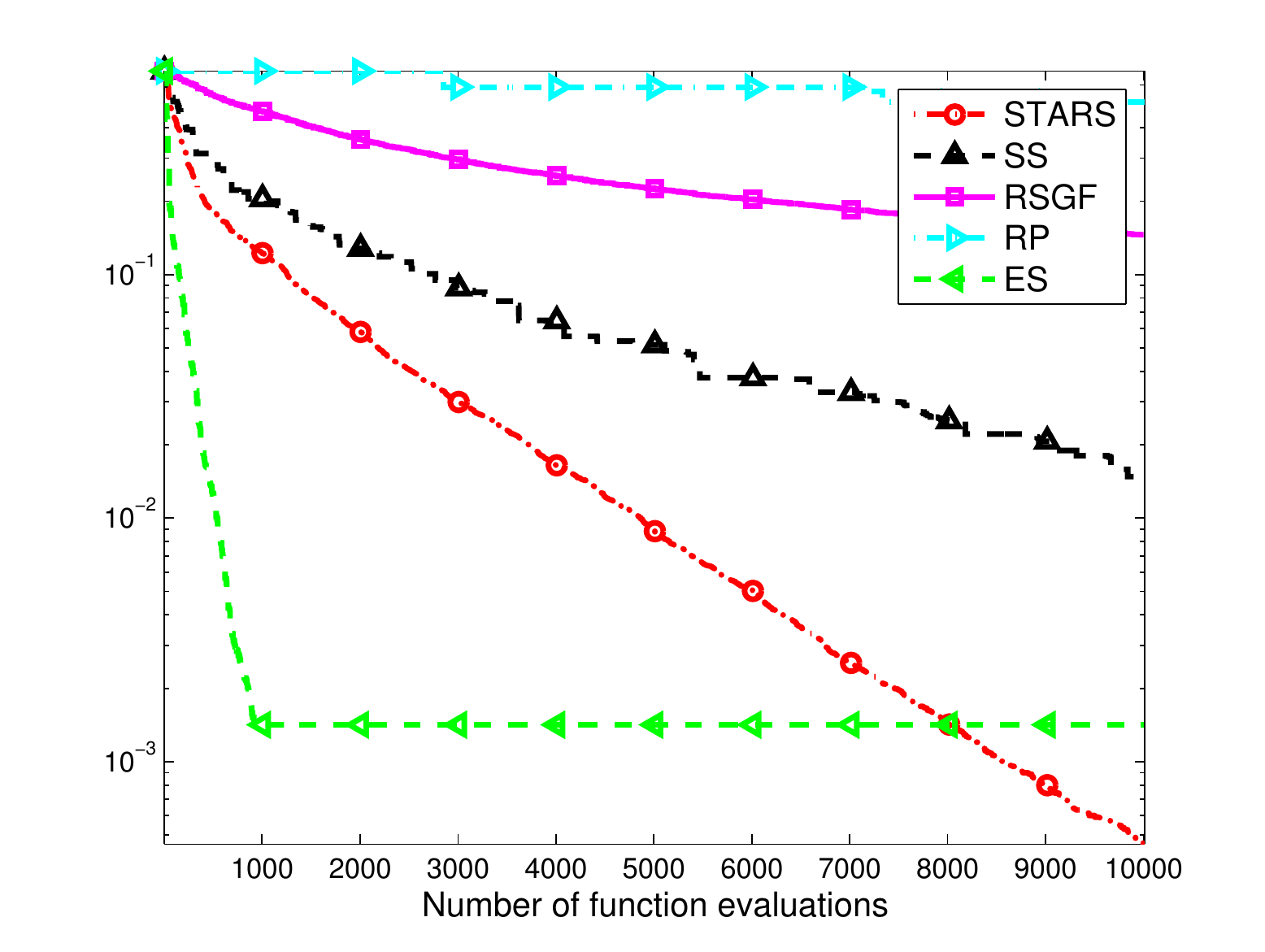}
        }%
        \subfigure[$\sigma_r=10^{-5}$]{%
           \label{fig:traj.rel.5}
           \includegraphics[width=0.5\textwidth]{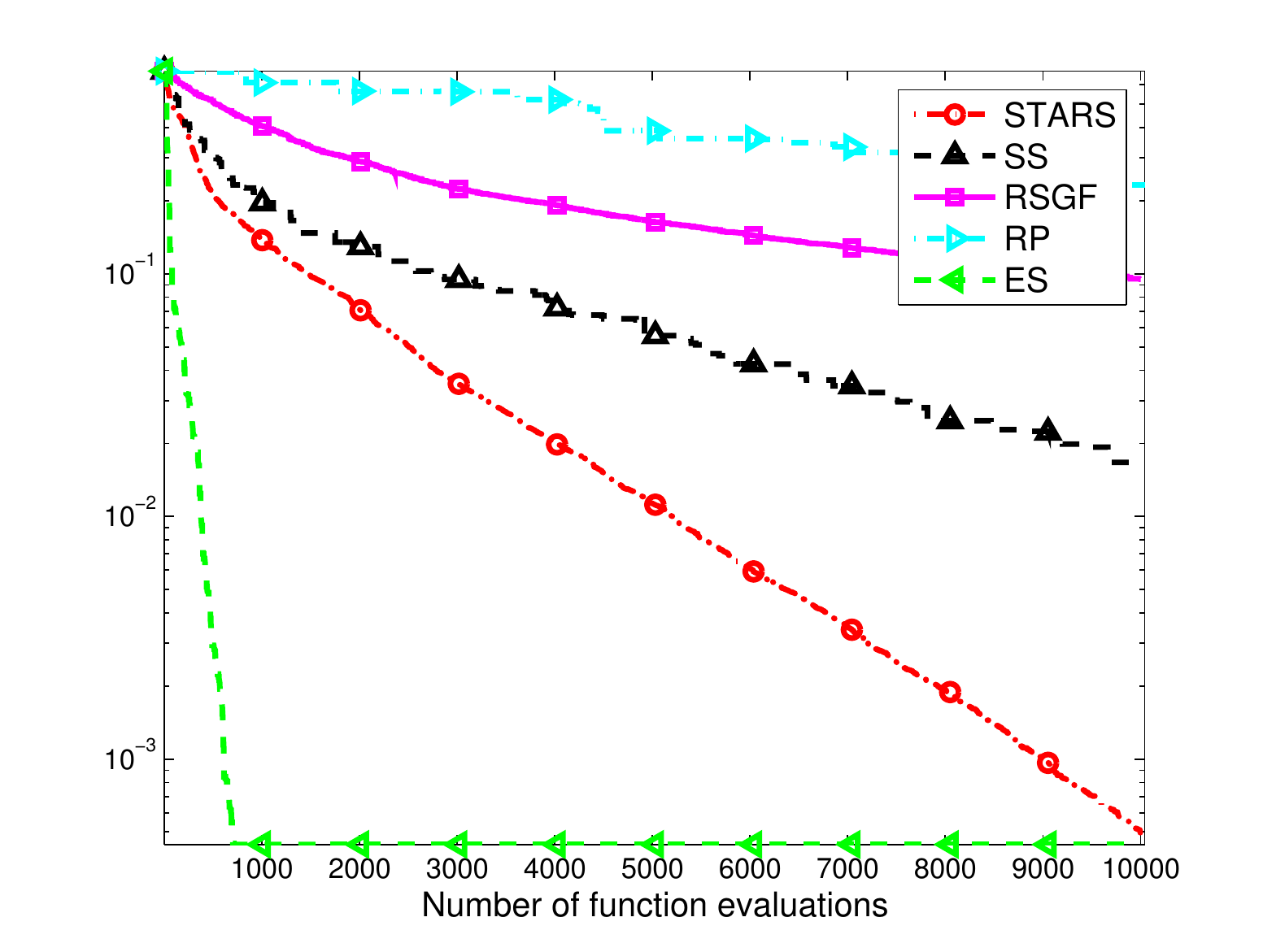}
        }\\
                \subfigure[$\sigma_a=10^{-3}$]{%
            \label{fig:traj.abs.3}
            \includegraphics[width=0.5\textwidth]{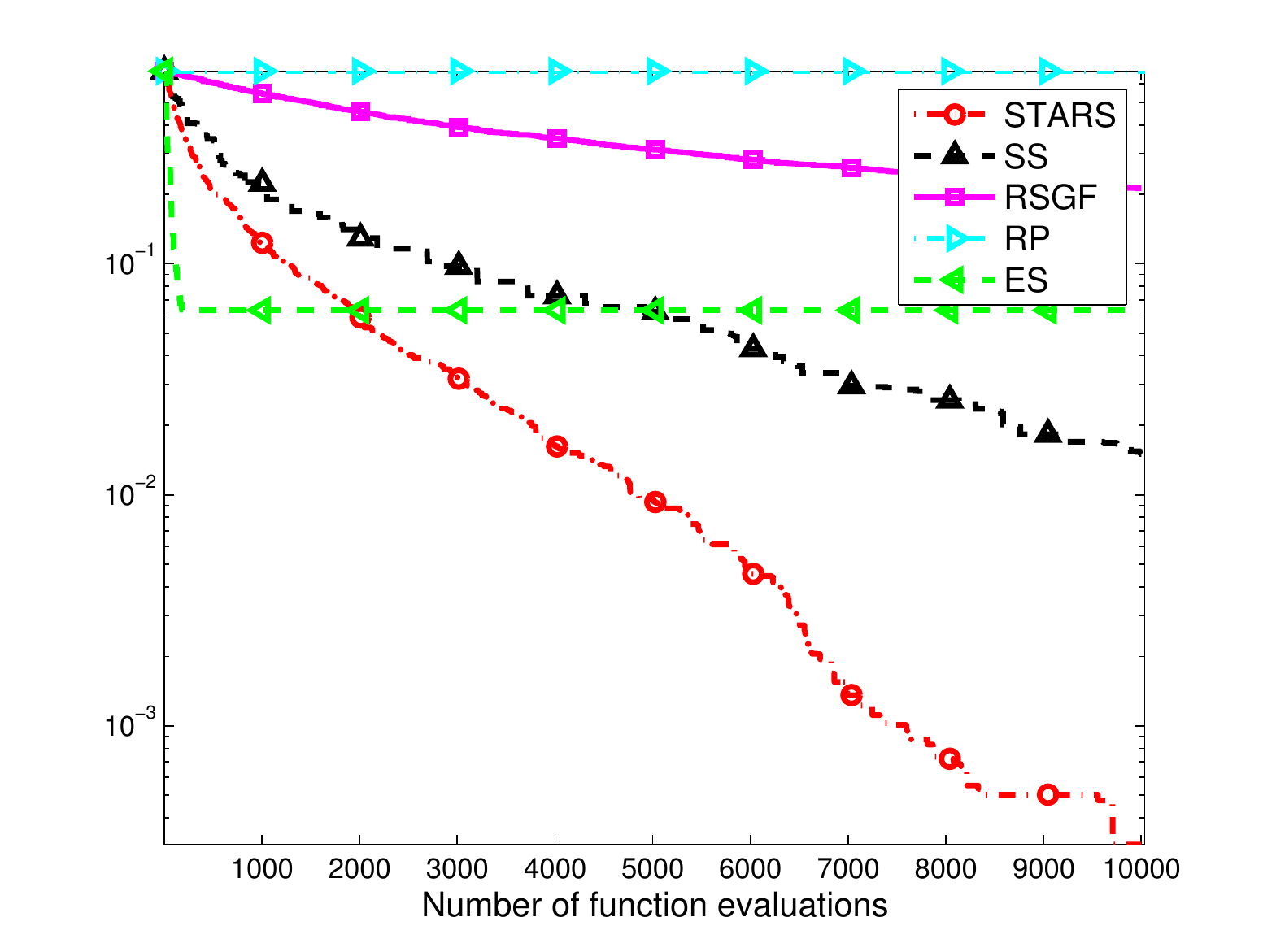}
        }%
        \subfigure[$\sigma_r=10^{-3}$]{%
           \label{fig:traj.rel.3}
           \includegraphics[width=0.5\textwidth]{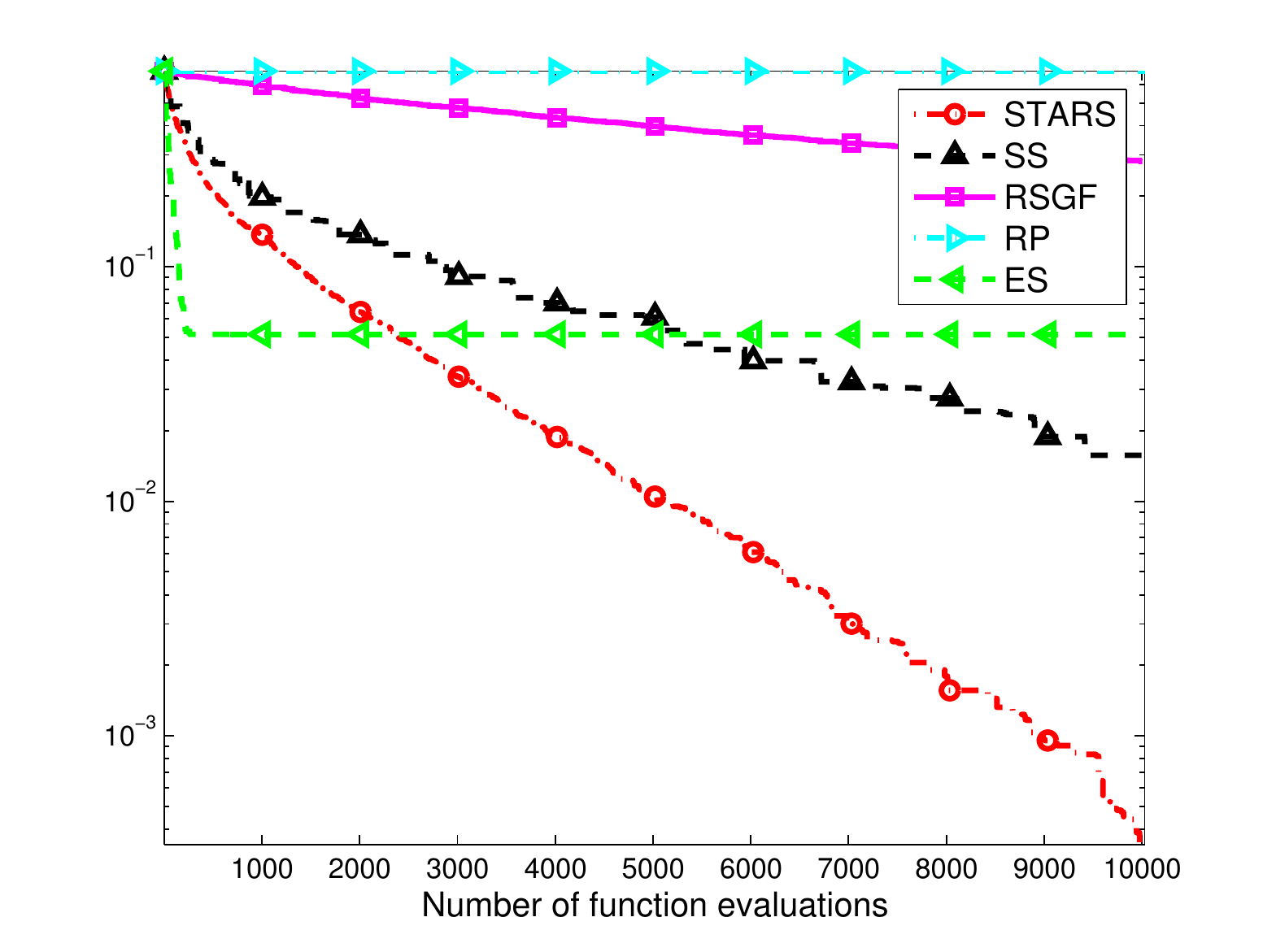}
        }\\ß
                \subfigure[$\sigma_a=10^{-1}$]{%
            \label{fig:traj.abs.1}
            \includegraphics[width=0.5\textwidth]{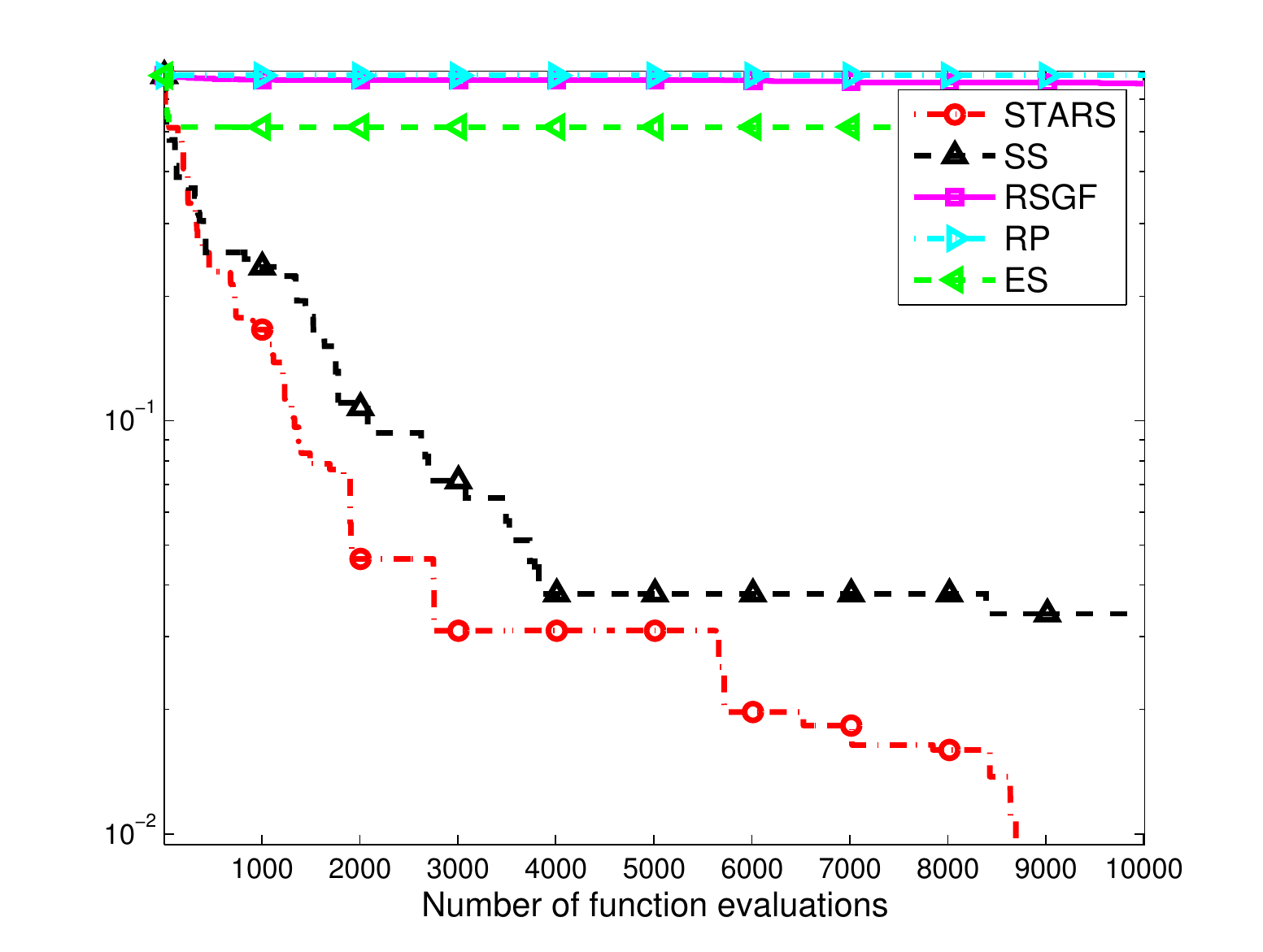}
        }%
        \subfigure[$\sigma_r=10^{-1}$]{%
           \label{fig:traj.rel.1}
           \includegraphics[width=0.5\textwidth]{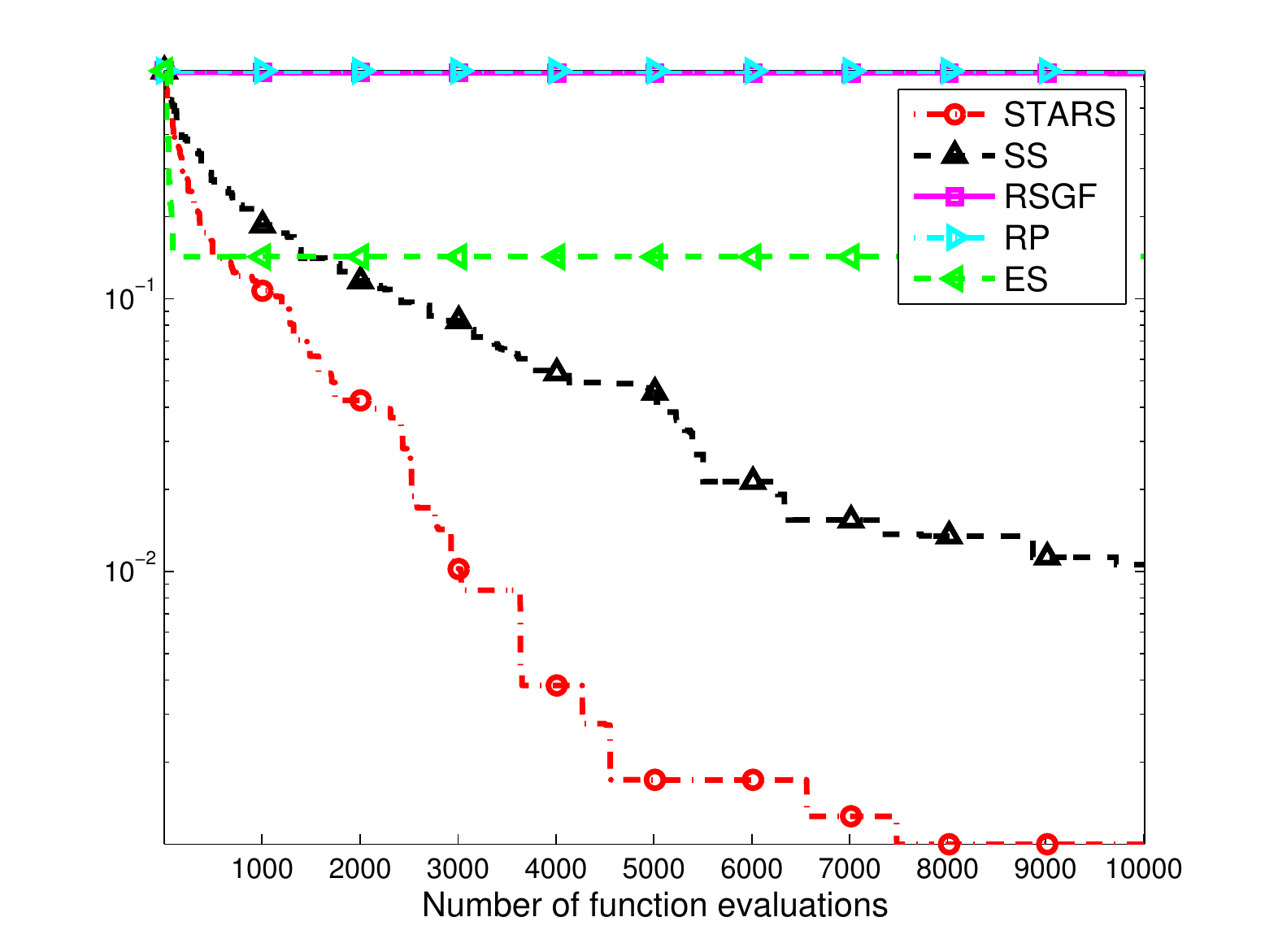}
        }
            \end{center}
            \vspace{-0.2in}
    \caption{Trajectory plots of five zero-order methods in the 
additive and multiplicative noise settings. The vertical axis represents the
true function value 
$f(x_k)$, and each line is the mean of 20 trials.\label{fig:traj.all}}
\end{figure}

We observe from Figure \ref{fig:traj.all} that {\sf STARS} outperforms the 
other four algorithms in terms of final accuracy in the solution. In both 
Figures~\ref{fig:traj.all}(a)~and~\ref{fig:traj.all}(b), {\sf ES} is the
fastest algorithm 
among all in the beginning. However, {\sf ES}  stops progressing after a few
iterations, 
whereas {\sf STARS} keeps progressing to a more accurate solution. As the noise 
level increases from $10^{-5}$ to $10^{-1}$, the performance of {\sf ES} 
gradually worsens, similar to the other methods {\sf SS}, {\sf RSGF}, and
{\sf 
RP}. However, the noise-invariant property of {\sf STARS} allows it to remain 
robust in these noisy environments.

\section*{Acknowledgments}
We are grateful to Katya Scheinberg for valuable discussions.

\clearpage

\section{Appendix}
\label{sec:append}
In this appendix we describe the implementation details of the four 
zero-order methods tested in Table~\ref{tab:alg.param} and
Section~\ref{sec.illustrative.example}. 

\subsection*{Random Search for Stochastic Optimization}
\balgorithm
\caption{({\sf SS}: Random Search for Stochastic Optimization)}
\label{alg:SS}
 \balgorithmic[1]
\STATE Choose initial point $x_0$ and iteration limit $N$. Fix step length $h_k 
= h = \frac{R}{(n+4)(N+1)^{1/2}L_0 }$ and smoothing stepsize $ \mu_k= \mu  
= \frac{\epsilon}{2 L_0 n^{1/2}}$. Set $k \gets 1$.\\
\STATE Generate a random Gaussian vector $u_k$. \\
\STATE Evaluate the function values $\tilde{f}(x_k;\xi_k)$ and $\tilde{f}(x_k + 
\mu_ku_k;\xi_k)$.\\
\STATE Call the random stochastic gradient-free oracle \\
$$s_\mu (x_k;u_k,\xi_k) = 
\frac{\tilde{f}(x_k+\mu_ku_k;\xi_k)-\tilde{f}(x_k;\xi_{k})}{\mu_k}u_k.$$
\STATE Set  $x_{k+1}=x_k-h_k s_\mu (x_k;u_k,\xi_k) $, update $k\gets k+1$, and 
return to Step 2.
  \ealgorithmic
\ealgorithm

Algorithm~\ref{alg:SS} provides the {\sf SS} (Random Search for Stochastic 
Optimization) algorithm from \cite{Nest2011}.

\paragraph{Remark:} $\epsilon$ is suggested to be $2^{-16}$ in the experiments 
in \cite{Nest2011}. Our experiments in Section~\ref{sec.illustrative.example},
however, show that this choice of $\epsilon$ forces $\sf SS$ to take small
steps
and thus  $\sf SS$ does not converge at all in the noisy environment. Hence, we
increase $\epsilon$ (to $\epsilon=0.1$) to show that optimistically, {\sf SS}
will work if the stepsize is big enough. Although in the
additive noise case one can recover {\sf STARS} by appropriately setting this
$\epsilon$ in {\sf SS}, it is not possible in the multiplicative case 
because {\sf STARS} takes dynamically adjusted smoothing stepsizes in this case.


\subsection*{Randomized Stochastic Gradient-Free Method}

\balgorithm
\caption{({\sf RSGF}: Randomized Stochastic Gradient-Free Method)}
\label{alg:RSGF}
 \balgorithmic[1]
\STATE Choose initial point $x_0$ and iteration limit $N$. Estimate $L_1$ and 
$\tilde{\sigma}^2$ of the noisy function $\tilde{f}$. Fix step length as 
$$\gamma_k= \gamma = \dfrac{1}{\sqrt{n+4}}  \min \left\{  
\dfrac{1}{4L_1\sqrt{n+4}}, \dfrac{\tilde{D}}{\tilde{\sigma}\sqrt{N}}  
\right\},$$ 
where 
$\tilde{D}=(2f(x_0)/L_1)^{\frac{1}{2}}$. Fix $ \mu_k = \mu = 0.0025$. Set $k 
\gets 1$.\\
\STATE Generate a Gaussian vector $u_k$. \\
\STATE Evaluate the function values $\tilde{f}(x_k;\xi_k)$ and $\tilde{f}(x_k + 
\mu_ku_k;\xi_k)$.\\
\STATE Call the stochastic zero-order oracle \\
$$G_\mu (x_k;u_k,\xi_k) = 
\frac{\tilde{f}(x_k+\mu_ku_k;\xi_k)-\tilde{f}(x_k;\xi_{k})}{\mu}u_k.$$
\STATE Set  $x_{k+1}=x_k-\gamma_k G_\mu (x_k;u_k,\xi_k) $, update $k\gets k+1$, 
and return to Step 2.
  \ealgorithmic
\ealgorithm

Algorithm~\ref{alg:RSGF} provides the {\sf RSGF} (Randomized Stochastic 
Gradient-Free Method) algorithm from \cite{Lan2012}.

\paragraph{Remark:} Although the convergence analysis of {\sf RSGF} is based on
knowledge of the constants $\L_1$ and $\sigma^2$, the discussion in
\cite{Lan2012} on how to implement {\sf RSGF} does not reply on these
inputs. Because the authors solved a support vector machine problem and an
inventory problem, both of which do not have known $L_1$ and $\sigma^2$ values,
they provide details on how to estimate these parameters given a noisy
function. Hence following \cite{Lan2012}, the parameter $L_1$ is estimated as
the $l_2$ norm of the Hessian of the 
deterministic approximation of the noisy objective functions. This estimation is
achieved 
by using a sample average approximation approach with 200 i.i.d.~samples. Also,
we compute the 
stochastic gradients of the objective functions at these randomly selected 
points and take the maximum variance of the stochastic gradients as an 
estimate of $\tilde{\sigma}^2$. 


\subsection*{Random Pursuit}

\balgorithm
\caption{({\sf RP}: Random Pursuit)}
\label{alg:RP}
 \balgorithmic[1]
\STATE Choose initial point $x_0$, iteration limit $N$, and line search 
accuracy $ \mu = 0.0025$. Set $k \gets 1$.\\
\STATE Choose a random Gaussian vector $u_k$. \\
\STATE Choose $x_{k+1}=x_k+{\sf LS_{APPROX_\mu}}(x_k,u_k)\cdot u_k $, update 
$k\gets k+1$, and return to Step 2.
  \ealgorithmic
\ealgorithm

Algorithm~\ref{alg:RP} provides the {\sf RP} (Random Pursuit) 
algorithm from \cite{Stich2011}.

\paragraph{Remark:} We follow the authors in 
\cite{Stich2011} and use the built-in MATLAB routine 
$\mathsf{fminunc.m}$ as the approximate line search oracle.

\subsection*{$ (1+1)$-Evolution Strategy}
\balgorithm
\caption{({\sf ES}: $ (1+1)$-Evolution Strategy)}
\label{alg:ES}
 \balgorithmic[1]
\STATE Choose initial point $x_0$, initial stepsize $\sigma_0$, iteration limit 
$N$, and probability of improvement $p=0.27$. Set $c_s=e^{\frac{1}{3}}\approx 
1.3956$ and $c_f = c_s \cdot e^{\frac{-p}{1-p}}\approx 0.8840$. Set $k \gets 
1$.\\
\STATE Generate a random Gaussian vector $u_k$.\\
\STATE Evaluate the function values $\tilde{f}(x_k;\xi_k)$ and $\tilde{f}(x_k + 
\sigma_k u_k;\xi_k)$.\\
\STATE If $\tilde{f}(x_k + \sigma_k u_k;\xi_k)\le\tilde{f}(x_k;\xi_k)$, then 
set $x_{k+1}=x_k +\sigma_k u_k$ and $\sigma_{k+1}=c_s\sigma_k$; 
\\ 
Otherwise, set $x_{k+1}=x_k$ and $\sigma_{k+1}=c_f\sigma_k$.
\STATE Update $k\gets k+1$ and return to Step 2.
  \ealgorithmic
\ealgorithm

Algorithm~\ref{alg:ES} provides the {\sf ES} ($ 
(1+1)$-Evolution Strategy) algorithm from \cite{Schumer1968}.

\paragraph{Remark:} A problem-specific parameter required by Algorithm 
\ref{alg:ES} is the initial stepsize $\sigma_0$, which is given in 
\cite{Stich2011} for some of our test functions. The stepsize is multiplied by 
a factor $c_s=e^{1/3}>1$ when the mutant's fitness is as good as the parent is  
and is otherwise multiplied by $c_s \cdot e^{\frac{-p}{1-p}}<1$, where $p$ is 
the probability of improvement set to the value $0.27$ suggested by Schumer 
and Steiglitz \cite{Schumer1968}.


\bibliographystyle{siam} 
\bibliography{noisecited}

\vspace{3em}

\small


\end{document}